\documentclass[reqno,11pt]{amsart}

\usepackage{graphicx} % Required for inserting images
\usepackage{todonotes}
\usepackage{aliascnt} % Pacchetto fondamentale per l'aliasing
\usepackage[english]{babel}
\usepackage{hyperref}
\usepackage{amssymb,amsmath,amsthm,color,mathtools,tikz,subfig,dirtytalk,csquotes,soul,orcidlink,enumitem}
\usepackage[dvipsnames]{xcolor}
\usepackage[capitalise, nameinlink]{cleveref}
\usepackage{comment}
\usepackage{amsmath}
\usepackage{amsthm}
\usepackage[left=2.8cm,right=2.8cm,top=2.8cm,bottom=2.8cm]{geometry}

\numberwithin{equation}{section}

\theoremstyle{definition} % Esempi di solito sono in testo dritto
\newtheorem{example}{Example}[section]
\crefname{example}{Example}{Examples}

\theoremstyle{plain}

\newaliascnt{theo}{example}       % Crea un clone del contatore example
\newtheorem{theo}[theo]{Theorem}  % Definisce l'ambiente sul clone
\aliascntresetthe{theo}           % Sincronizza il reset
\crefname{theo}{Theorem}{Theorems} % Istruisce cleveref

\newaliascnt{prop}{example}
\newtheorem{prop}[prop]{Proposition}
\aliascntresetthe{prop}
\crefname{prop}{Proposition}{Propositions}

\newaliascnt{lemma}{example}
\newtheorem{lemma}[lemma]{Lemma}
\aliascntresetthe{lemma}
\crefname{lemma}{Lemma}{lemmata}

\newaliascnt{cor}{example}
\newtheorem{cor}[cor]{Corollary}
\aliascntresetthe{cor}
\crefname{cor}{Corollary}{Corollaries}

\theoremstyle{definition}

\newaliascnt{dhef}{example}
\newtheorem{dhef}[dhef]{Definition}
\aliascntresetthe{dhef}
\crefname{dhef}{Definition}{Definitions}

\newaliascnt{hyp}{example}
\newtheorem{hyp}[hyp]{Assumption}
\aliascntresetthe{hyp}
\crefname{hyp}{Assumption}{Assumptions}

\theoremstyle{remark}

\newaliascnt{remark}{example}
\newtheorem{remark}[remark]{Remark}
\aliascntresetthe{remark}
\crefname{remark}{Remark}{Remarks}

\def\elle#1{L^{#1}}
\def\elleom#1{L^{#1}(\Omega)}

\def\Sd{{\mathbb{S}^{d-1}}}
\renewcommand{\phi}{\varphi}

\def\C{\mathcal{C}}
\def\sub{\subseteq}

\newlist{assumption}{enumerate}{1}
\setlist[assumption,1]{label=(\roman*),ref=(\roman*)}
\crefname{assumptioni}{assumption}{assumptions}

\def\Haus{{\mathcal{H}^{d-1}}}

\def\Qom{{Q_\Omega}}

\def\O{\Omega}
\def\o{\omega}
\def\io{\int_\Omega}
\def\norma#1#2{ \|#1 \|_{#2}}
\def \e{\varepsilon}

\def\semin#1#2{ [#1 ]_{#2}}

\def\L{\mathcal{L}}

\def\N{\mathbb{N}}

\def\R{\mathbb{R}}
\def\Rd{\mathbb{R}^d}

\newcommand{\abs}[1]{\left|#1\right|}

\DeclareMathOperator{\supp}{supp}

\title{Closing the gap: Maz'ya-Shaposhnikova and asymptotics of fractional perimeters}

\author{Elisa Davoli
\orcidlink{0000-0002-1715-5004}}\email{elisa.davoli@tuwien.ac.at}
\address{TU Wien,
Institut für Analysis und Scientific Computing,
Wiedner Hauptstraße 8-10,
1040 Vienna, Austria}
\author{Alberto Fanizza
\orcidlink{0009-0007-8135-0537}}\email{alberto.fanizza@asc.tuwien.ac.at}
\address{TU Wien,
Institut für Analysis und Scientific Computing,
Wiedner Hauptstraße 8-10,
1040 Vienna, Austria}
\author{Marco Picerni
\orcidlink{0009-0004-4364-4831}}\email{mpicerni@sissa.it}
\address{SISSA, via Bonomea 265, 34136, Trieste, Italy}

\subjclass[2020]{46E35, 26A33, 46B20}

\keywords{Maz'ya-Shaposhnikova formula, Gagliardo seminorms, asymptotic volume ratio, $s$-perimeters, Gamma-convergence, metric measure spaces}

\begin{document}

% \begin{abstract}
%     We generalize the Maz'ya-Shaposhnikova formula to functions which are only locally integrable and, thus, not necessarily vanishing at infinity. By introducing a notion of mass at infinity, we identify an explicit expression for the asymptotics of the Gagliardo seminorm as the fractional parameter tends to zero. On the one-hand, our limiting functional reduces to the classical Lebesgue norm for functions with vanishing mass at infinity. On the other hand, it coincides with the pointwise limit previously identified for $s$-fractional perimeters when evaluated on characteristic functions. We further show that the same functional encodes the asymptotic behavior of Gagliardo seminorms in the sense of Gamma-convergence and provide an extension to the metric measure setting.
% \end{abstract}

\begin{abstract}
    We prove a generalization of the Maz'ya-Shaposhnikova formula in the case $p=2$ for functions that may not belong to ${L^2}(\mathbb{R}^d)$ and, thus, might not vanish at infinity.
    By introducing a notion of \emph{mass at infinity}, we explicitly characterize the limit as $s\to0^+$ of Gagliardo seminorms localized on a bounded Lipschitz domain $\Omega$. By `localized', we mean here that we account only for interactions involving at least one point in $\Omega$. The identified limiting functional  provides a unifying framework to link the classical Maz'ya-Shaposhnikova formula and the asymptotics of nonlocal perimeters. On the one hand, it reduces to the classical $L^2$ norm for functions that are globally integrable on $\mathbb{R}^d$. On the other hand, it recovers the pointwise limit of $s$-fractional perimeters when evaluated on characteristic functions of sets. 
    We further show that the same functional encodes the asymptotic behavior of Gagliardo seminorms in the sense of Gamma-convergence with respect to the weak-$L^2$ topology. Finally, we provide an extension to the setting of metric measure spaces.
\end{abstract}

\maketitle
\section{Introduction}

{

The study of the asymptotics of the Gagliardo seminorms is a central topic in nonlocal analysis, since it allows to provide a quantitative meaning to the idea that the Sobolev-Slobodeckij space $W^{s,p}$ should be, in some sense, close to $\elle p$ when $s$ is close to zero and close to $W^{1,p}$ for $s$ close to $1$. 

The first result in this direction is the Bourgain-Brezis-Mironescu (BBM) formula \cite[Theorem 2]{bourgain:hal-00747692}, which deals with the case $s\to1^-$. The formula states that for every function $u\in \elle p(\O)$, with $\O\sub\R^d$ bounded smooth domain, and for every $p\in(1,+\infty)$,
\begin{equation}\label{eq:BBM}
    \lim_{s\to1^-}(1-s)\iint_{\O\times \O}\frac{\abs{u(x)-u(y)}^p}{\abs{x-y}^{d+sp}}dxdy=K_{d,p}\norma{\nabla u}{\elle p(\O)}^p,
\end{equation}
with the convention that the right hand side equals $+\infty$ if $u\notin W^{1,p}(\O)$. Here, the constant $K_{d,p}$ depends only on $p$ and $d$. An analogous characterization for $p=1$ was also proved in \cite{bourgain:hal-00747692}, but only under the \textit{a priori} assumption that $u\in W^{1,1}(\O)$.  This result was successfully extended to functions in $BV(\O)$ by Dávila \cite[Theorem 1]{davila2002open} (see also the development by Leoni and Spector \cite{Leoni_Spector}). An analysis of the asymptotics in the sense of $\Gamma$-convergence was carried out by Ponce in \cite{ponce2004new}.

Recently, the validity of the BBM formula has been investigated beyond the Euclidean setting, encompassing, for example, metric measure spaces satisfying a Poincar\'e inequality \cite{DiMarino_Squassina,oleinik2025,oleinik2025asymptotic}, thin films \cite{Braides_BBM_thin}, as well as general anisotropic convolution kernels \cite{gennaioli2025sharp,Davoli_BBM_sharp,Davoli_BBM_antisym,Kubin_Saracco_stefani}.

The Maz'ya-Shaposhnikova (MS) formula \cite{MazyaShaposhnikova2002} provides a counterpart to \eqref{eq:BBM} for the asymptotics of the $s$-Gagliardo seminorm as $s$ approaches zero.
To be precise, given a function $u\in W^{s_0,p}(\R^d)$ for some $s_0$ and $p\in(1,+\infty)$, we have
\begin{equation}
\label{eq:intro:classicalMS}
    \lim_{s\to0^+} \frac{s}{2}\iint_{\R^d\times \R^d} \frac{\abs{u(x)-u(y)}^p}{\abs{x-y}^{d+sp}}dxdy=\frac{d\omega_d}{p}\norma{u}{\elle p(\R^d)}^p,
\end{equation}
where $\omega_d$ is the Lebesgue measure of the unit ball in $\R^d$. We stress that \eqref{eq:intro:classicalMS} only holds if $u\in L^p(\R^d)$, and so only if $u$ vanishes at infinity in a measure theoretic sense. The multiplicative factor $d\o_d$, which is the $d-1$ Hausdorff measure of the unit sphere $\Sd$, accounts for the fact that, as $s$ approaches zero, the fractional kernels
\[
\rho_s(x,y)=\frac{s}{\abs{x-y}^{d+sp}}
\]
concentrate their mass at infinity (in a measure-theoretic sense). 
Note, also, that $\omega_d$ encodes the asymptotic volume ratio (henceforth abbreviated $AVR$) of the metric measure space $(\Rd,d_{E},\mathcal{L}^d)$ with respect to the standard volume function $V(t)=t^d$.
The role played by the $AVR$ was made much more evident by the works \cite{Han-MS&BBM,HanPinamontiXu2025,HanPinamontirigidity}, where the MS formula was generalized to the metric measure setting. Notable examples of spaces for which the formula holds are finite-dimensional (non-Euclidean) Banach spaces, Carnot Groups and $MCP(K,N)$ spaces (which are metric spaces satisfying a synthetic upper bound on the dimension and a lower bound on the curvature, see \cite{Ohta2007MCP,Sturm2006II}).

The choice of restricting the analysis in the classical MS formula to functions with global $L^p$-integrability on the whole $\R^d$ comes with a structural limitation: forcing a function to vanish at infinity artificially suppresses its tail behavior, which the MS formula should intuitively emphasize. Roughly speaking, as $s\to0^+$, the kernel $s\abs{x-y}^{-(d+sp)}$ decays very slowly at the infinity, causing long-range interactions to account for most of the energy. 
%\todo[inline]{E: sposterei qui}
In fact, as $s\to0^+$, the tail of the function $\abs{\cdot}^{-(d+sp)}$ approaches the threshold of non integrability, given by the exponent $-d$. From the point of view of the Gagliardo seminorm, this means that the interactions that prevail in this regime are the ones between points that lie \say{at infinite distance}. This suggests that, in the limit, we should see an interaction energy between points of $\R^d$ and points \say{at infinity}. As an intuition, one can think that the fractional kernels force one of the two Lebesgue measures appearing in the integral \eqref{eq:intro:classicalMS} to become a $d-1$-Hausdorff measure concentrated on a sphere representing all the possible (oriented) directions of $\R^d$. In the classical Maz'ya-Shaposhnikova formula, this phenomenon is completely hidden and the interactions at infinity disappear within the $\elle p $ norm. 

%\todo[inline]{E: fine parte che cambierei}
%Forcing a function to vanish at infinity artificially suppresses precisely the asymptotic behavior we want to describe.

To properly overcome this limitation, i.e., to treat functions that may not vanish at infinity (and therefore fully capturing the highly nonlocal nature of the small-$s$ regime), we consider the \emph{Gagliardo seminorm with interior-exterior interactions:}
\begin{equation}
    \label{def:intro:gagliardoqomega}
    \semin{u}{W^{s,p}(\Qom)}^p=\iint_\Qom\frac{\abs{u(x)-u(y)}^p}{\abs{x-y}^{d+sp}}dxdy.
\end{equation}
Here, $\Omega$ is a bounded, Lipschitz domain and $\Qom=(\Rd\times\Rd)\setminus(\O^c\times\O^c)$. 
We say that a measurable function $u:\Rd\to\R$ belongs to $W^{s,p}(\Qom)$ for some $p\in[1,+\infty)$ if it belongs to $\elleom p$ and $\semin{u}{W^{s,p}(\Qom)}<+\infty$. Note that  $W^{s,p}(\Qom)\subset \elle p_{\text{loc}}(\Rd)$. In fact, since $\O$ is bounded, there exists a constant $C=C(\O,s,d)$ such that, for every $y\in\O$ and $x\in\R^d$,
\begin{equation*}
    \frac{1}{|x-y|^{d+sp}}\geq C\frac{1}{1+|x|^{d+sp}}.
\end{equation*}
Let now $u\in W^{s,p}(Q_\O)$ and assume, without loss of generality, that $u$ has zero mean in $\O$. Then, by Jensen's inequality,
\begin{align*}
    \semin{u}{W^{s,p}(\Qom)}^p&\geq C \int_{\R^d} \frac{1}{1+|x|^{d+sp}}\int_\O|u(x)-u(y)|^p dydx\\
    &\geq C \L^d(\O)\int_{\R^d}\frac{|u(x)|^p}{1+|x|^{d+sp}}dx.
\end{align*}
This extension of the Gagliardo seminorm, as evident from the decomposition
\begin{equation}
    \semin{u}{W^{s,p}(\Qom)}^p=\semin{u}{W^{s,p}(\O)}^p+2\int_\O\int_{\O^c}\frac{\abs{u(x)-u(y)}^p}{\abs{x-y}^{d+sp}}dxdy,
\end{equation}
isolates the energy arising from interactions involving at least one point in $\Omega$ and ignores all the exterior-exterior contributions. Far from being a mere technical variant, this is the functional used to model nonlocal phenomena on bounded domains. Indeed, it is the natural energy for minimization problems with Dirichlet boundary conditions (where the value of the function needs to be prescribed on the whole $\Omega^c$, rather than just on $\partial\Omega$) \cite{DKP_Nonlocal_harnack,Franzina-Palatucci-eigenvalues,Palatucci-sp-Dirichlet}. It also appears as singular perturbation in phase-transition models \cite{SavinValdinoci_gammaconv}, and in the theory of nonlocal minimal surfaces \cite{DSV16,DSV17,DSV20a,DSV20b,DSV26}. In particular %(and perhaps most importantly for this paper), 
it allows to define the fractional perimeter:
\[
\operatorname{Per}_s(E;\Omega)=\frac12\semin{\chi_E}{W^{s,1}(\Qom)},
\]
which approximates the De Giorgi perimeter functional as $s\to1^-$ \cite{ADpM-Perimeters,Fanizza-Perimeter}.\\

In the same spirit of the Maz'ya-Shaposhnikova formula, the asymptotic behavior of the fractional perimeter as $s\to0^+$ has been investigated by Dipierro, Figalli, Palatucci and Valdinoci in \cite{Quartet}. To be precise, they proved that, for any measurable set $E$ such that the quantity (also known as nonlocal tail \cite{KKP_note,KKP_Holdercont})
\begin{equation}
    \label{eq:alpha1}
\alpha_1(E)=\lim_{s\to 0^+}s\int_{B_1^c(0)}\frac{\chi_E(y)}{\abs{y}^{d+s}}dy
\end{equation}
is well-defined (that is, such that the limit in \eqref{eq:alpha1} exists), one has
\begin{equation}\label{eq:intro:quartet}
    s\operatorname{Per}_s(E;\Omega)=(d\omega_d-\alpha_1(E))\L^d(E\cap\O)+\alpha_1(E)\L^d(\O\setminus E).
\end{equation}
We stress that $\alpha_1(E)=0$ for every bounded set $E$. This implies that \eqref{eq:intro:classicalMS} and \eqref{eq:intro:quartet} give the same result when a bounded set is considered. We also recall that, as shown in \cite{Quartet}, the object on the right-hand side of \eqref{eq:intro:quartet} does not define a measure.

The result in \cite{Quartet} confirms the previously described intuition that  whenever a function $u$ has nonvanishing mass at infinity (in particular, $u$ may not be integrable on the whole $\R^d$), we should not expect $s\semin{u}{W^{s,p}(\Qom)}^p$ to converge to some multiple of $\norma{u}{\elleom p}^p$, but rather that its tail contributions should play a role in the asymptotics as $s\to 0^+$.

In this paper, we make the above intuition rigorous and prove an extension of the Maz'ya-Shaposhnikova formula to functions which may have nonzero mass at infinity. We focus here on the case $p=2$. For the case  of a general $p\geq 1$, a different approach is required; this is the content of the companion paper \cite{DaFaPi2}. 

To state our main result, we need to introduce the notion of $p$-mass at infinity.

\begin{dhef}
    \label{Def:alphau}
    Let $p\in \N$, and let $u:\R^d\to\R$ be a measurable function. We define the \emph{$p$-mass at infinity of $u$} as
    \begin{equation}
        \alpha_p(u):=\lim_{s\to0^+}s\int_{B_1^c(0)}\frac{u(y)}{\abs{y}^{d+sp}}\,dy,
    \end{equation}
    whenever this quantity is well--defined. 
\end{dhef}

Note that $\alpha_p(u)$ exists for some $p\geq 1$ if and only if $\alpha_q(u)$ exists for every $q\geq 1$ and, in that case 
\begin{equation}\label{eq:alpha1_alphap}
    \alpha_p(u)=\frac1p\alpha_1(u)
\end{equation}
for every such $u$ and $p\geq 1$.
To simplify notation, since  in the present paper we focus on the hilbertian case, we write $\alpha(u)$ instead of $\alpha_2(u)$. The notation $\alpha_p(u)\in \R$ will implicitly mean that $\alpha_p(u)$ exists and is finite; in this case, we say that $u$ has finite mass at infinity.

\begin{remark}
    \label{rk:alpha1}
    A direct computation in polar coordinates shows that, for every $p\geq 1$, $\alpha_p(1)=\frac{d\omega_d}{p}$.
\end{remark}

In the rest of the paper we will denote with $u^+=\max\{u,0\}$ and $u^-=\max\{-u,0\}$, respectively, the positive and negative part of $u$. We can now state the main result.

\begin{theo}
    \label{Theo:MS pointwise:euclidean}
    Let $u\in L^2_{loc}(\R^d)$ be such that $u\in H^{s_0}(\Qom)$ for some ${s_0\in(0,1)}$.
    Assume that $\alpha(u^+), \alpha(u^-), \alpha(u^2)\in \R$. 
    Then,
    \begin{equation}
        \label{Eq:intro:ExtMS:pw:eucl}
        \lim_{s\to 0^+}\frac{s}{2}[u]^2_{H^s(\Qom)}=\sum_{k=0}^{2}\binom{2}{k}(-1)^k\alpha(u^k)\int_\Omega u^{2-k}(x)\,dx.
    \end{equation}
\end{theo}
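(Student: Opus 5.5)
Our plan is to split the localized seminorm as
\[
\semin{u}{H^s(\Qom)}^2=\semin{u}{H^s(\O)}^2+2\int_\O\int_{\O^c}\frac{\abs{u(x)-u(y)}^2}{\abs{x-y}^{d+2s}}\,dy\,dx
\]
and treat the two pieces separately. For the interior term we will show $s\semin{u}{H^s(\O)}^2\to0$. Since $u\in H^{s_0}(\O)$ and $\O$ is bounded, for $s<s_0$ we have $\abs{x-y}^{-d-2s}\le C\abs{x-y}^{-d-2s_0}$ on $\O\times\O$ (with $C$ depending on $\operatorname{diam}\O$). Hence $\semin{u}{H^s(\O)}$ stays bounded and the factor $s$ kills it. The same argument will kill the exterior term restricted to $y\in B_R\setminus\O$ for any fixed $R$ with $\O\subset B_{R}$: there $u\in L^2(B_R)$ (as $u\in L^2_{loc}$), and the integrand is bounded by $C(\abs{u(x)}^2+\abs{u(y)}^2)\abs{x-y}^{-d-2s}$. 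Near $\partial\O$ we instead use the $H^{s_0}(\Qom)$ bound to dominate $\abs{x-y}^{-d-2s}\le C\abs{x-y}^{-d-2s_0}$ for $\abs{x-y}\le 1$.

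Everything therefore reduces to the far-field term
\[
s\int_\O\int_{B_R^c}\frac{u(x)^2-2u(x)u(y)+u(y)^2}{\abs{x-y}^{d+2s}}\,dy\,dx .
\]
We expand the square and obtain three terms, each of the form $\int_\O u^{2-k}(x)\, s\!\int_{B_R^c} u^k(y)\abs{x-y}^{-d-2s}dy\,dx$ with $k=0,1,2$. The claim is that, for each fixed $x\in\O$ and $v\in\{1,u^\pm,u^2\}$,
\[
s\int_{B_R^c}\frac{v(y)}{\abs{x-y}^{d+2s}}dy\to\alpha(v),
\]
uniformly in $x\in\O$. To see this, write $\abs{x-y}^{-d-2s}=\abs{y}^{-d-2s}\bigl(1+O(\abs{x}/\abs{y})\bigr)$ for $\abs{y}\ge R$. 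The leading part gives $s\int_{B_R^c}v\abs y^{-d-2s}$, which differs from $s\int_{B_1^c}v\abs y^{-d-2s}$ by $s\int_{B_R\setminus B_1}\abs v\abs{y}^{-d-2s}\to0$ since $v\in L^1_{loc}$. So the leading part tends to $\alpha(v)$ by \cref{Def:alphau}, and for $v=1$ this is \cref{rk:alpha1}.

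The error part is bounded by $C\,s\int_{B_R^c}\abs{v}\abs{y}^{-d-2s-1}dy$, with $C$ uniform for $x\in\O$, and controlling it is the main obstacle. Here nonnegativity is essential, which is why we split $u=u^+-u^-$ and use the separate hypotheses on $\alpha(u^\pm)$ and $\alpha(u^2)$. For $v\ge0$ the error is at most $C R^{-1}s\int_{B_R^c}v\abs y^{-d-2s}$, which is bounded uniformly in small $s$ by $CR^{-1}(\alpha(v)+1)$. A cleaner route uses the exact ratio: $\abs{x-y}^{-d-2s}/\abs y^{-d-2s}\in[(1+r/R)^{-d-2},(1-r/R)^{-d-2}]$ for $x\in\O\subset B_r$ and $s<1$. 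Then the limsup and liminf of the $y$-integral lie between $(1\pm r/R)^{\mp(d+2)}\alpha(v)$, uniformly in $x$. Since the near-field pieces vanish for each fixed $R$, we let $s\to0$ first and then $R\to\infty$, and obtain exact limits.

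Finally we multiply these uniform limits by $u^{2-k}(x)$ and integrate over $\O$; this is legitimate because $u,u^2\in L^1(\O)$. The $k=1$ term combines $\alpha(u^+)-\alpha(u^-)=\alpha(u)$. Restoring the factor $\tfrac12$ against the factor $2$ in the decomposition, we get
\[
\lim_{s\to0^+}\frac s2\semin{u}{H^s(\Qom)}^2=\alpha(1)\!\int_\O u^2-2\alpha(u)\!\int_\O u+\alpha(u^2)\,\L^d(\O),
\]
which is exactly \eqref{Eq:intro:ExtMS:pw:eucl}.
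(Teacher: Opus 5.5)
Your proof is correct, but it is organized differently from the paper's.

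\textbf{The paper's route.} The paper expands $\abs{u(x)-u(y)}^2$ over all of $\O\times\O^c$. It evaluates each resulting term through Lemma~\ref{lemma:alpha-int}, i.e.\ by applying Proposition~\ref{prop:HanPinamonti} to $\chi_\O$ on $\Rd$ with the mixed measures $\mu_{f,g}$, $f,g\in\{1,u^\pm,u^2\}$. The finite-energy hypothesis needed there is Lemma~\ref{lemma:product}. This is where two extra tools enter: the fractional Sobolev embedding, giving $u^2\in L^q(\O)$ for some $q>1$, and the level-set bound for Lipschitz $\partial\O$. They are needed because each expanded term separately carries the kernel's singularity at $\partial\O$.

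\textbf{Your route.} You discard all bounded-range interactions $\O\times(B_R\setminus\O)$ before expanding. You then expand only in the far field, where $\abs{x-y}^{-d-2s}$ is comparable to $\abs{y}^{-d-2s}$ uniformly in $x\in\O$. There the double integrals decouple, and all you need is a uniform-in-$x$ version of Lemma~\ref{lemma:lualphau} together with $u\in L^2(\O)\subset L^1(\O)$.

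\textbf{Two small remarks.}
\begin{enumerate}
\item On $\O\times(B_R\setminus\O)$, the bound $\abs{x-y}\le 2R$ and $\semin{u}{H^{s_0}(\Qom)}<\infty$ already suffice, exactly as in Remark~\ref{rk:seminormalimitati}. The pointwise bound by $C(\abs{u(x)}^2+\abs{u(y)}^2)\abs{x-y}^{-d-2s}$ is therefore superfluous.
\item For fixed $R$, your sandwich only determines the limit up to the factors $(1\pm r/R)^{\mp(d+2)}$; it does not prove the exact uniform convergence you first announce. Taking $s\to0^+$ and then $R\to\infty$ closes this, since the left-hand side does not depend on $R$.
\end{enumerate}

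\textbf{What each route buys.} Your route is elementary and self-contained. It uses only the boundedness of $\O$: no Lipschitz regularity, no Sobolev embedding, no coarea-type estimate. It also never requires the individually expanded terms to be finite up to $\partial\O$. The paper's route is phrased through $\mu_{f,g}$ and the abstract hypotheses of Proposition~\ref{prop:HanPinamonti}. That is what lets the argument carry over to general kernels on metric measure spaces in Theorem~\ref{Theo:MS pointwise:mms}, where an explicit comparison such as $\abs{x-y}\approx\abs{y}$ is not available.
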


The key novelty of Theorem \ref{Theo:MS pointwise:euclidean} is that it not only holds for functions belonging to $\elle2(\R^d)$, for which it reduces to \eqref{eq:intro:classicalMS}, but also for functions in $\elle2_{\textrm{loc}}(\R^d)$ (such as periodic functions), which may not be integrable and thus exhibit a different behaviour as $|x|\to \infty$. 
Moreover, an application of Theorem \ref{Theo:MS pointwise:euclidean} to $u=\chi_E$ (assuming that $\alpha_1(E)$ exists), leads to
\[
\lim_{s\to 0^+}\frac{s}{2}[u]^2_{H^s(\Qom)}=\alpha(1)\L^d(E\cap\O)-2\alpha(E)\L^d(E\cap\O)+\alpha(E)\L^d(\O),
\]
which is (by \eqref{eq:alpha1_alphap} and a change of variable on the parameter $s$ in the left-hand side)
\begin{equation}
    \label{eq:intro:fracper:zero}
    \lim_{s\to 0^+}\frac s2\operatorname{Per}_s(E;\Omega)=\frac12\bigg((d\omega_d-\alpha_1(E))\L^d(E\cap\O)+\alpha_1(E)\L^d(\O\setminus E)\bigg).
\end{equation}
In this sense, Theorem \ref{Theo:MS pointwise:euclidean} generalizes the asymptotics for the fractional perimeter proved in \cite[Theorem 2.5]{Quartet} to a wider class of functions.\par In Theorem \ref{Theo:MS:integrale doppio}, we complete our pointwise analysis by further showing that for functions $u(r,\theta)$ (where $r>0$ and $\theta\in\Sd$) converging to $u_\infty(\theta)$ as $r\to+\infty$, the limit obtained in Theorem \ref{Theo:MS pointwise:euclidean} can be written in the compact form
\[
\frac12\io\int_\Sd\abs{u(x)-u_\infty(\theta)}^2dxd\Haus(\theta),
\]
thus providing a true interaction energy between the values of $u$ in $\O$ and at infinity.\\

\par
Our second result (see Theorem \ref{Theo:MSGamma}) is to show that the limit in \eqref{Eq:intro:ExtMS:pw:eucl} also holds in the sense of Gamma-convergence with respect to the weak-$L^2$ topology. As a direct consequence, we deduce that the functional appearing in \eqref{Eq:intro:ExtMS:pw:eucl} provides also the Gamma-limit as $s\to0^+$ of the fractional perimeters and it is therefore the lower semicontinuous envelope of the set functional appearing in \eqref{eq:intro:fracper:zero} with respect to the same topology .\par

Our third contribution consists in an extension of Theorem \ref{Theo:MS pointwise:euclidean} to the setting of metric measure spaces (see Theorem \ref{Theo:MS pointwise:mms}). We conclude our study with an application to Carnot Groups and a comparison with the results obtained for the Gaussian fractional perimeter \cite{GaussianFracPer}.\\
}

The paper is structured as follows: in \Cref{Sec:puntuale Rd} we build the technical infrastructure needed to prove Theorem \ref{Theo:MS pointwise:euclidean}, discuss possible extensions to the case $p\neq 2$, and prove Theorem \ref{Theo:MS:integrale doppio}. 
 Section \ref{Sec:Gammaconv} is devoted to the proof of Theorem \ref{Theo:MSGamma}, whereas in \Cref{Sec:puntuale mms} we extend \Cref{Theo:MS pointwise:euclidean} to the setting of metric measure spaces.

\section{The extended Maz'ya-Shaposhnikova formula in the Euclidean setting}\label{Sec:puntuale Rd}

In this section, we prove Theorem \ref{Theo:MS pointwise:euclidean} and we discuss possible extensions to the case $p\neq2$. Here and in what follows, $H^s$ will denote the space $W^{s,2}$.

\subsection{Mass at infinity and Asymptotic Volume Ratio}

To streamline the proof of Theorem \ref{Theo:MS pointwise:euclidean}, we first prove some preliminary results regarding the $p$-mass at infinity and its connection with the Asymptotic Volume Ratio of the Euclidean space equipped with a weighted measure. This establishes a connection with the setting of \cite{HanPinamontiXu2025}.

 The following lemma shows that the mass at infinity $\alpha(u)$ can be equivalently defined by considering balls $B_R(x)$ centered at any $x \in \mathbb{R}^d$, up to taking the limit as $R\to\infty$. This implies that the quantity $\alpha(u)$, when it exists, is invariant under translations of $u$, thus clarifying the terminology in Definition \ref{Def:alphau},

\begin{lemma}
    \label{lemma:lualphau}
    Let $u\in L^1_{loc}(\R^d)$. Assume that $\alpha(u^+),\,\alpha(u^-)\in \R$. Then,
    \begin{equation}
        \label{luwelldefined}
        \begin{split}
             L_u&:=\lim_{R\to\infty}\liminf_{s\to 0^+}s\int_{B_R^c(x)}\frac{u(y)}{\abs{x-y}^{d+2s}}\,dy\\
             &=\lim_{R\to\infty}\limsup_{s\to 0^+}s\int_{B_R^c(x)}\frac{u(y)}{\abs{x-y}^{d+2s}}\,dy
        \end{split}
    \end{equation}
    is well defined and does not depend on $x\in\R^d$. Moreover, $L_u=\alpha(u)$.
\end{lemma}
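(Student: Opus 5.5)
By linearity and the splitting $u=u^+-u^-$, I would reduce to the case $u\ge 0$ with $\alpha(u)\in\R$. The general statement then follows because both the liminf and the limsup in \eqref{luwelldefined} are sub/superadditive in the obvious way. Once both parts are shown to have genuine limits in $s$, the difference of limits is a limit. So I fix $u\ge0$, $u\in L^1_{loc}$, with $\alpha(u)=\lim_{s\to0^+}s\int_{B_1^c(0)}u(y)|y|^{-d-2s}dy$ finite.

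\emph{Step 1: the radius $R$ is irrelevant.} For fixed $x$ and $R<R'$, the difference between the integrals over $B_R^c(x)$ and $B_{R'}^c(x)$ is
\[
s\int_{B_{R'}(x)\setminus B_R(x)}u(y)|x-y|^{-d-2s}dy\le s\,R^{-d-2s}\|u\|_{L^1(B_{R'}(x))}\to0 .
\]
Here I use local integrability. Hence $\liminf_s$ and $\limsup_s$ of $s\int_{B_R^c(x)}$ do not depend on $R$ at all, so the outer limit in $R$ is trivial. The same argument shows that $\alpha(u)$ may be computed with $B_R^c(0)$ for any $R$.

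\emph{Step 2: the centre is irrelevant.} This is the main point. Fix $x$ and $\varepsilon\in(0,1)$, and take $R$ large so that $|x|\le\varepsilon R$. For $|y|\ge R$ we have
\[
(1-\varepsilon)|y|\le|x-y|\le(1+\varepsilon)|y|,
\]
so $|x-y|^{-d-2s}$ is comparable to $|y|^{-d-2s}$ with factors $(1\pm\varepsilon)^{-d-2s}$. These factors tend to $(1\pm\varepsilon)^{-d}$ as $s\to0$. Moreover, $B_R^c(x)$ and $B_R^c(0)$ differ only by a bounded set, whose contribution vanishes as in Step 1. Using $u\ge0$, I obtain
\[
(1+\varepsilon)^{-d}\alpha(u)\le\liminf_{s}s\int_{B_R^c(x)}\frac{u(y)}{|x-y|^{d+2s}}dy\le\limsup_{s}(\cdots)\le(1-\varepsilon)^{-d}\alpha(u).
\]
Both bounds rely on the existence of the limit defining $\alpha(u)$ over $B_R^c(0)$, which holds by Step 1. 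By Step 1 the middle quantities do not depend on $R$, so I can let $\varepsilon\to0$. This gives that liminf and limsup coincide and equal $\alpha(u)$ for every $x$. In particular $L_u$ is well defined, independent of $x$, and equal to $\alpha(u)$.

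The only delicate step is Step 2. It uses the sign condition, which is why the hypothesis is placed on $u^\pm$ separately rather than on $u$: for a signed $u$ the two-sided comparison of kernels fails, since cancellations could be distorted by the $(1\pm\varepsilon)$ factors. Carrying out the argument for $u^+$ and $u^-$ and subtracting the resulting limits yields the claim for $u$.
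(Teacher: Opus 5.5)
Your proof is correct and has the same skeleton as the paper's. Step 1 (independence of $R$ via local integrability) is identical. In Step 2 you likewise discard the bounded symmetric difference $B_R(x)\Delta B_R(0)$ and compare the kernels $|x-y|^{-d-2s}$ and $|y|^{-d-2s}$ on $B_R^c(0)$.

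The two arguments differ only in that kernel comparison:
\begin{itemize}
\item The paper uses the mean value theorem, $\bigl||x-y|^{-d-2s}-|y|^{-d-2s}\bigr|\lesssim |x|\,|y|^{-d-2s-1}\le \frac{|x|}{R}|y|^{-d-2s}$ on $B_R^c(0)$, and integrates this against $|u|$. This gives an additive error of order $\frac{|x|}{R}\alpha(|u|)$ and handles signed $u$ directly, with no splitting. The explicit estimate \eqref{eq:estimateII} is reused later, in the proofs of \Cref{lemma:alpha-int} and \Cref{prop:casocritico}.
\item You use the ratio bounds $(1\pm\varepsilon)^{-d-2s}$, which need only the triangle inequality. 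They give a sandwich for $u\ge 0$, and you then subtract the results for $u^\pm$. Your version also states explicitly that the $s$-limit exists for every fixed $R$ and $x$, so the outer limit in $R$ is redundant. This also follows from combining the paper's two steps, but the paper does not say so.
\end{itemize}

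One correction to your closing remark: the kernel comparison does not fail for signed $u$. Your ratio bounds already give $\bigl||x-y|^{-d-2s}-|y|^{-d-2s}\bigr|\le c_\varepsilon |y|^{-d-2s}$ on $B_R^c(0)$. Here $c_\varepsilon$ is independent of $s\in(0,1)$ and $c_\varepsilon\to 0$ as $\varepsilon\to0$, and this bound can be integrated against $|u|$ exactly as the paper does. What is genuinely needed is control of $\alpha(|u|)$, and that is where the hypothesis on $u^\pm$ enters in both arguments.
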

\begin{proof}
    We prove that, if $\alpha(u^+)$ and $\alpha(u^-)$ exist and are finite, then for any $x\in\R^d$ both limits in \ref{luwelldefined} exist and are equal to $\alpha(u)$.\par
    \textit{Step $1$.} Let $0<r<R<+\infty$. For any $x\in \R^d$ we have
    \begin{equation*}
        \limsup_{s\to 0^+} \left|s\int_{B_r^c(x)}\frac{u(y)}{\abs{x-y}^{d+2s}}dy-s\int_{B_R^c(x)}\frac{u(y)}{\abs{x-y}^{d+2s}}dy\right|=0.
    \end{equation*}
    In fact:
    \begin{equation*}
        \begin{split}
            \left|s\int_{B_r^c(x)}\frac{u(y)}{\abs{x-y}^{d+2s}}dy-s\int_{B_R^c(x)}\frac{u(y)}{\abs{x-y}^{d+2s}}dy\right|&\leq s\int_{B_r^c(x)\setminus B_R^c(x)}\frac{\abs{u(y)}}{\abs{x-y}^{d+2s}}\,dy\\
            &\leq \frac{s}{r^{d+2s}}\int_{B_r^c(x)\setminus B_R^c(x)}\abs{u(y)}\,dy,
        \end{split}
    \end{equation*}
    which converges to zero as $s\to0^+$ since $u\in L^1_{loc}(\R^d)$. In particular, choosing $x=0$, by the hypothesis on the existence of $\alpha(u^+)$ and $\alpha(u^-)$, we infer that $\alpha(|u|)$ exists as well, and for every $R>0$:
    \begin{equation}
        \label{Eq:alpha1R}
        \alpha(|u|)=\lim_{s\to0^+}{s}\int_{B_1^c(0)}\frac{|u(y)|}{\abs{y}^{d+2s}}\,dy
        =
        \lim_{s\to0^+}{s}\int_{B_R^c(0)}\frac{|u(y)|}{\abs{y}^{d+2s}}\,dy.
    \end{equation}
    
    \textit{Step 2.} We prove that for every $x\in\R^d$, there holds
    \begin{equation}
    \label{claim:step2}
    \lim_{R\to+\infty}\limsup_{s\to 0^+}   \left|s\int_{B_R^c(x)}\frac{u(y)}{\abs{x-y}^{d+2s}}\,dy-s\int_{B_R^c(0)}\frac{u(y)}{\abs{y}^{d+2s}}\,dy\right|=0.
    \end{equation}
    Indeed, we estimate
     \begin{multline*}
          \left|s\int_{B_R^c(x)}\frac{u(y)}{\abs{x-y}^{d+2s}}\,dy-s\int_{B_R^c(0)}\frac{u(y)}{\abs{y}^{d+2s}}\,dy\right|\\ \leq \left|s\int_{B_R^c(x)}\frac{u(y)}{\abs{x-y}^{d+2s}}\,dy-s\int_{B_R^c(0)}\frac{u(y)}{\abs{x-y}^{d+2s}}\,dy\right|\\
            +\left|s\int_{B_R^c(0)}\frac{u(y)}{\abs{x-y}^{d+2s}}\,dy-s\int_{B_R^c(0)}\frac{u(y)}{\abs{y}^{d+2s}}\,dy\right|
            :=I+II.
    \end{multline*}
    We proceed by estimating each summand separately.\\
    For the first term, we have that
    \begin{equation*}
        \begin{split}
            I&=\left|s\int_{B_R^c(x)}\frac{u(y)}{\abs{x-y}^{d+2s}}\,dy-s\int_{B_R^c(0)}\frac{u(y)}{\abs{y-x}^{d+2s}}\,dy\right|\\
            &  \leq s\int_{B_R(x)\Delta B_R(0)}\frac{\abs{u(y)}}{\abs{x-y}^{d+2s}}dy
        \end{split}
    \end{equation*}
    For $R>0$ big enough (depending on $x$), there exist $0<\delta_1<\delta _2$, (depending on $R$), such that $B_R(x)\Delta B_R(0)\subset B_{\delta_2}(0)\cap B_{\delta_1}^c(x)$. 
    
    Hence,
    \begin{equation*}
       I\leq s\int_{B_{\delta_2}(0)\cap B_{\delta_1}^c(x)}\frac{\abs{u(y)}}{\abs{x-y}^{d+2s}}dy \leq \frac{s}{\delta_1^{d+2s}}\int_{B_{\delta_2(0)}}\abs{u(y)}\,dy,
    \end{equation*}
    where the latter quantity converges to zero as $s\to 0^+$.
    For the second term, we rewrite
    \begin{equation*}
        II=\left|s\int_{B_R^c(0)}u(y)\left(\frac{1}{\abs{y-x}^{d+2s}}-\frac{1}{\abs{y}^{d+2s}}\right)dy\right|.
    \end{equation*}
    Let us fix $y\in \R^d$ and call $h_s(x)=\abs{y-x}^{-(d+2s)}$ for $x\neq y$. A direct computation yields
    \begin{equation*}
        \abs{Dh_s(x)}=\frac{(d+2s)}{\abs{y-x}^{d+2s+1}}.
    \end{equation*}
    For $R>2|x|$, we have that $\abs{y-tx}\geq\abs{y}/2$ for every $y\in B_R^c(0)$ and $t\in [0,1]$. Thus,
    \begin{equation*}
        \abs{\frac{1}{\abs{y-x}^{d+2s}}-\frac{1}{\abs{y}^{d+2s}}}=\abs{h_s(x)-h_s(0)}\leq \frac{(d+2s)\abs{x}2^{d+2s+1}}{\abs{y}^{d+2s+1}.}
    \end{equation*}
    Hence,
    \begin{equation}
    \label{eq:estimateII}
        II\leq \abs{x}(d+2s)\,s\int_{B^c_R(0)}\frac{|u(y)|}{\abs{y}^{d+2s+1}}\,dy\leq \frac{\abs{x}(d+2s)}{R}\,s\int_{B_R^c(0)}\frac{|u(y)|}{\abs{y}^{d+2s}}\,dy,
    \end{equation}
    which, by \eqref{Eq:alpha1R}, entails
    \begin{equation*}
        \lim_{R\to+\infty}\lim_{s\to 0^+}II=0.
    \end{equation*}
    This completes the proof of \eqref{claim:step2}. The statement of the lemma follows then by combining Steps 1 and 2.
\end{proof}

\begin{cor}
\label{lemma:lu-positive}
Let $u\in L^1_{loc}(\R^d)$, and assume that $\alpha(u^+),\alpha(u^-)\in \R$. Then, for every $p\in \N$, 
    \begin{equation}
        \label{luwelldefined-p}
        \begin{split}
             L_u^p&:=\lim_{R\to\infty}\liminf_{s\to 0^+}s\int_{B_R^c(x)}\frac{u(y)}{\abs{x-y}^{d+sp}}\,dy\\
             &=\lim_{R\to\infty}\limsup_{s\to 0^+}s\int_{B_R^c(x)}\frac{u(y)}{\abs{x-y}^{d+sp}}\,dy
        \end{split}
    \end{equation}
    is well defined and does not depend on $x\in\R^d$. Moreover, $L_u^p=\alpha_p(u)$.
\end{cor}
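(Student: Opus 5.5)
The plan is to reduce the corollary to \Cref{lemma:lualphau} by a change of the fractional parameter, and then to check that each step of that proof carries over with $2s$ replaced by $sp$.

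\textbf{Reduction to the case $p=2$.} Set $t=sp/2$, so that $s=2t/p$ and $t\to0^+$ exactly when $s\to0^+$. Then
\[
s\int_{B_R^c(x)}\frac{u(y)}{\abs{x-y}^{d+sp}}\,dy=\frac2p\, t\int_{B_R^c(x)}\frac{u(y)}{\abs{x-y}^{d+2t}}\,dy .
\]
The map $s\mapsto t$ is an increasing bijection of $(0,+\infty)$. Hence the $\liminf$ and $\limsup$ as $s\to0^+$ of the left-hand side equal $\frac2p$ times the $\liminf$ and $\limsup$ as $t\to0^+$ of the integrals appearing in \eqref{luwelldefined}.

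The same substitution with $x=0$ and $R=1$ gives $\alpha_p(v)=\frac2p\,\alpha(v)$ for any measurable $v$ for which one side exists. In particular $\alpha(u^\pm)\in\R$, so the hypotheses of \Cref{lemma:lualphau} are satisfied.

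\textbf{Conclusion.} \Cref{lemma:lualphau} states that, for every $x\in\R^d$, the two iterated limits in \eqref{luwelldefined} exist, coincide, and equal $\alpha(u)$, independently of $x$. Multiplying by $\frac2p$, the two limits in \eqref{luwelldefined-p} exist, coincide, and equal $\frac2p\,\alpha(u)$, again independently of $x$.

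It remains to identify $\frac2p\,\alpha(u)$ with $\alpha_p(u)$. The existence of $\alpha(u)$ follows from the existence of $\alpha(u^+)$ and $\alpha(u^-)$ together with $u=u^+-u^-$, and indeed \Cref{lemma:lualphau} already gives $L_u=\alpha(u)$. The relation $\alpha_p(u)=\frac2p\,\alpha(u)$ then follows from the substitution above, consistently with \eqref{eq:alpha1_alphap}. This proves $L_u^p=\alpha_p(u)$.

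\textbf{Fallback argument.} If one prefers not to rely on reparametrization, one can redo Steps 1 and 2 of the proof of \Cref{lemma:lualphau} with exponent $d+sp$ in place of $d+2s$.
\begin{itemize}
\item The annulus estimate of Step 1 only uses the bound $s\,r^{-(d+sp)}\int\abs{u}\,dy\to0$, which still holds.
\item The symmetric-difference term $I$ in Step 2 is handled identically.
\item The gradient bound for the term $II$ becomes $\abs{x}(d+sp)2^{d+sp+1}\abs{y}^{-(d+sp+1)}$, which produces the factor $\abs{x}/R$ times $s\int_{B_R^c(0)}\abs{u}\,\abs{y}^{-d-sp}\,dy$. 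This last quantity is bounded via $\alpha_p(\abs{u})$, exactly as in \eqref{Eq:alpha1R}.
\end{itemize}
The only point requiring any care in either route is the bookkeeping between $\alpha$ and $\alpha_p$, namely ensuring that the existence of $\alpha(u^\pm)$ transfers to $\alpha_p(u^\pm)$. This is immediate from the same change of variables, so I expect no genuine obstacle.
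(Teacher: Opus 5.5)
Your proof is correct, but your main route differs from the paper's. The paper omits the argument and only says that the proof of \Cref{lemma:lualphau} can be repeated verbatim with $d+sp$ in place of $d+2s$; that is essentially your fallback.

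Your substitution $t=sp/2$ instead makes the statement a genuine corollary of the lemma. The identity
$s\int_{B_R^c(x)}u(y)\abs{x-y}^{-d-sp}\,dy=\frac2p\,t\int_{B_R^c(x)}u(y)\abs{x-y}^{-d-2t}\,dy$
holds, and $s\mapsto t$ is an increasing bijection with a positive constant factor $\frac2p$. So the $\liminf$ and $\limsup$ transfer exactly, and the lemma applies under literally the same hypotheses, which are already stated in terms of $\alpha=\alpha_2$. No estimate has to be redone.

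The same identity at $x=0$, $R=1$ gives $\alpha_p(v)=\frac2p\,\alpha(v)$, with existence on one side equivalent to existence on the other. This is \eqref{eq:alpha1_alphap}, which the paper only asserts in the introduction.

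That point is in fact needed by the paper's route too. Redoing Step 2 with exponent $d+sp$ requires control of $s\int_{B_R^c(0)}\abs{u}\,\abs{y}^{-d-sp}\,dy$, i.e.\ the existence of $\alpha_p(\abs{u})$, whereas the hypothesis concerns $\alpha_2(u^\pm)$. So the verbatim repetition tacitly relies on the rescaling you make explicit, as your closing comment on the bookkeeping between $\alpha$ and $\alpha_p$ correctly anticipates.

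Repeating the proof would only have an advantage for kernel families that are not homogeneous in the parameter, where no such rescaling is available. That is not the situation here.
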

\begin{proof}
We omit the proof of this result, for it follows by repeating \emph{verbatim} the proof of Lemma \ref{lemma:lualphau} with different notational realizations.
\end{proof}

To prove Theorem \ref{Theo:MS pointwise:euclidean}, we will exploit a corollary of \cite[Proof of Theorem 2.2]{HanPinamontiXu2025}. To keep this paper self-contained, we recall its statement in the proposition below.

\begin{prop}[Theorem 2.2 in \cite{HanPinamontiXu2025}]
\label{prop:HanPinamonti}
    Let $(M, {d}, {m})$ be a metric measure space and let \mbox{$p\geq 1$}. Suppose there exists a sequence of non-negative, symmetric, measurable functions $\left(\rho_n\right)_{n \in \mathbb{N}}$ defined on $\{(x, y) \in M \times M: x \neq y\}$, such that:
    \begin{enumerate}[label=\Alph*)]
    \item \label{hA} there exists a constant $L \geq 0$ such that, for any $x \in M$,
    $$
    \lim _{R \rightarrow+\infty} \limsup_{n \rightarrow \infty} \int_{B_R^c(x)} \rho_n(x, y) {dm}(y)=\lim _{R \rightarrow+\infty} \liminf_{n \rightarrow \infty} \int_{B_R^c(x)} \rho_n(x, y) {dm}(y)=L
    $$
    where $B_R^c(x)=\{y \in M: {d}(x, y) \geq R\},$
    \item \label{hB} for any $u \in L^p(M)$ such that there exists $n_0 \in \mathbb{N}$ with
    $$
    \mathcal{E}_{n_0}(u):=\iint_{\{(x, y): x, y \in M, x \neq y\}}|u(x)-u(y)|^p \rho_{n_0}(x, y) dm(x) dm(y)<+\infty
    $$
    we have
    $$
    \lim _{n \rightarrow \infty} \iint_{\{(x, y): 0<{d}(x, y)<R\}}|u(x)-u(y)|^p \rho_n(x, y) dm(x) dm(y)=0, \quad \forall R>0,
    $$
    \item \label{hC} for any $R>0$ sufficiently large, there exists a constant $C=C(R)$ such that, for any $x \in M, n \in \mathbb{N}$,
    $$
    \int_{B_R^c(x)} \rho_n(x, y) {dm}(y) \leq C.
    $$
    \end{enumerate}
    Then, if $u\in L^p(M)$ is such that $\mathcal{E}_{n_0}(u)<+\infty$ for some $n_0\in \N$, there holds
    $$\lim_{n\to +\infty}\mathcal{E}_{n}(u)=2L\|u\|_{L^p(M)}^p.$$
\end{prop}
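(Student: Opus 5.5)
The plan is to split the energy $\mathcal{E}_n(u)$ at a large scale $R$:
\[
\mathcal{E}_n(u)=\iint_{\{0<d(x,y)<R\}}|u(x)-u(y)|^p\rho_n\,dm\,dm+\iint_{\{d(x,y)\geq R\}}|u(x)-u(y)|^p\rho_n\,dm\,dm=:N_{n,R}(u)+F_{n,R}(u).
\]
By hypothesis \ref{hB}, $N_{n,R}(u)\to0$ as $n\to\infty$ for every fixed $R$, since $u\in L^p(M)$ and $\mathcal{E}_{n_0}(u)<+\infty$. It therefore suffices to show
\[
\lim_{R\to\infty}\limsup_{n\to\infty}F_{n,R}(u)=\lim_{R\to\infty}\liminf_{n\to\infty}F_{n,R}(u)=2L\|u\|_{L^p(M)}^p .
\]
This is enough because the left-hand side $\mathcal{E}_n(u)$ does not depend on $R$.

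\emph{Step 1: the far part is $L^p$-Lipschitz, uniformly in $n$.} Fix $R$ large enough that \ref{hC} holds. Then $F_{n,R}^{1/p}$ is the $L^p$ norm of $(x,y)\mapsto u(x)-u(y)$ with respect to the measure $\rho_n\chi_{\{d\geq R\}}\,dm\otimes dm$. By Minkowski, symmetry of $\rho_n$ and \ref{hC},
\[
|F_{n,R}(u)^{1/p}-F_{n,R}(v)^{1/p}|\leq F_{n,R}(u-v)^{1/p}\leq \Big(2^{p}\int_M|u-v|^p(x)\int_{B_R^c(x)}\rho_n(x,y)\,dm(y)\,dm(x)\Big)^{1/p}\leq 2C^{1/p}\|u-v\|_{L^p}.
\]
The constant is uniform in $n$. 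It is also uniform in $R'\geq R$, since $B_{R'}^c(x)\subset B_R^c(x)$.

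So it suffices to prove the claim for $v=u\chi_{B_r(o)}$ with $o\in M$ fixed and $r$ large, and then let $r\to\infty$. Note that we never need $\mathcal{E}_{n_0}(v)<\infty$ or hypothesis \ref{hB} for $v$: \ref{hB} is used only for $u$ itself.

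\emph{Step 2: boundedly supported functions.} Let $v$ be supported in $B_r(o)$ and take $R>2r$. If $d(x,y)\geq R$, at most one of $x,y$ lies in $B_r(o)$, so $|v(x)-v(y)|^p=|v(x)|^p+|v(y)|^p$. By symmetry,
\[
F_{n,R}(v)=2\int_M|v(x)|^p\,g_{n,R}(x)\,dm(x),\qquad g_{n,R}(x):=\int_{B_R^c(x)}\rho_n(x,y)\,dm(y)\leq C .
\]
Since $|v|^pC\in L^1$, Fatou's lemma and its reverse give
\[
2\int|v|^p\liminf_n g_{n,R}\leq\liminf_n F_{n,R}(v)\leq\limsup_nF_{n,R}(v)\leq 2\int|v|^p\limsup_n g_{n,R}.
\]
Both $\liminf_n g_{n,R}$ and $\limsup_n g_{n,R}$ are bounded by $C$ and, by \ref{hA}, converge pointwise to $L$ as $R\to\infty$. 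Dominated convergence then yields the claim for $v$: both outer limits equal $2L\|v\|_p^p$.

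\emph{Step 3: conclusion and main obstacle.} Combining Step 1 with $\|u-u\chi_{B_r(o)}\|_p\to0$ and $\|u\chi_{B_r(o)}\|_p\to\|u\|_p$ passes the identity to $u$. Adding the vanishing near part $N_{n,R}(u)$ then gives $\lim_n\mathcal{E}_n(u)=2L\|u\|_p^p$.

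The delicate point is the order of limits. We need $\lim_R\limsup_n$ rather than a joint limit, and the approximation must be uniform in both $n$ and $R$; Step 1 provides exactly this. One should also check measurability of $g_{n,R}$ in $x$, which follows from Tonelli.
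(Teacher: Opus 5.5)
Your proof is correct, but it is organized differently from the argument the paper relies on. The paper does not reprove this statement; it quotes it from \cite{HanPinamontiXu2025}. The only thing it reports about that proof (Remark~\ref{rem:CweakHanPinam}) is that hypothesis C is invoked there exactly once, for the reverse Fatou step in the $\limsup$ estimate of a term $II_a(R,n)$.

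Your route has two stages. First you reduce to boundedly supported functions via the estimate $|F_{n,R}(u)^{1/p}-F_{n,R}(v)^{1/p}|\le 2C^{1/p}\|u-v\|_{L^p(M)}$, which is uniform in $n$ and in $R'\ge R$. Then you use the exact decoupling $|v(x)-v(y)|^p=|v(x)|^p+|v(y)|^p$ on $\{d(x,y)\ge R\}$ for $R>2r$, after which A together with Fatou and reverse Fatou closes the argument. The order of limits is handled by the $R$-independence of $\limsup_n\mathcal{E}_n(u)$ and $\liminf_n\mathcal{E}_n(u)$, which comes from B applied to $u$ alone. This is short and avoids any control of long-range cross terms.

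The price is that C is used twice. In the density step its uniformity over all $x\in M$ is essential, so for $u$ without bounded support your argument does not survive the weakening of C described in Remark~\ref{rem:CweakHanPinam}.

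In the case the paper actually needs, $\chi_\Omega$ with $\Omega$ bounded in Lemma~\ref{lemma:alpha-int}, your Steps~1 and~3 are unnecessary. There C only supplies a majorant for $g_{n,R}$ on the support in Step~2, in line with that Remark. Note, though, that reverse Fatou needs this majorant to be uniform in $n$ (for $n$ large). That is slightly more than the bound on $\limsup_n g_{n,R}$ stated in the Remark.
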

\begin{remark}
    \label{rem:CweakHanPinam}
    Note that in \cite[Proof of Theorem 2.2]{HanPinamontiXu2025} assumption \ref{hC} is only needed for the application of Fatou's lemma in the limsup estimate for the term $II_a(R,n)$. If the function $u$ has compact support, it is therefore enough to  show that for any $R>0$ there exists a constant $C=C(R)$ such that, for every $y\in \supp (u)$
    \begin{equation}
    %\label{eq:bdR}
        \limsup_{n\to \infty}\int_{B_R^c(y)}\rho_n(x,y)dm(x) \leq C(R).
    \end{equation}
    If this is true, then the thesis of Proposition \ref{prop:HanPinamonti} still holds even if Assumption \ref{hC} is not fulfilled.
\end{remark}

We now state three lemmata which will allow us to prove Theorem \ref{Theo:MS pointwise:euclidean}. For completeness, we state and prove them for every $p\geq 1$, since the proof doesn't depend on the specific value of $p$ considered.

The next lemma shows that the contribution of the interior-interior interactions in the Gagliardo seminorms vanishes as $s\to 0$.

\begin{lemma}
\label{lemma:Vanishes}
Let $u\in W^{s_0,p}(\O)$ for some $s_0\in(0,1)$. Then,
\begin{equation}
    \frac{s}{2}\semin{u}{W^{s,p}(\Omega)}^p\to0\quad\text{as } s\to0^+.
\end{equation}
\end{lemma}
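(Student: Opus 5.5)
The strategy is to dominate the kernel $|x-y|^{-(d+sp)}$ on the bounded set $\Omega\times\Omega$ by a multiple of $|x-y|^{-(d+s_0p)}$, uniformly for $s\le s_0$. The conclusion then follows from the explicit factor $s$ in front. The key observation is that $\Omega$ is bounded, so $|x-y|\le D:=\operatorname{diam}(\Omega)$ for all $x,y\in\Omega$.

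For $0<s\le s_0$ and $x\neq y$ in $\Omega$ we write
\[
\frac{1}{|x-y|^{d+sp}}=\frac{|x-y|^{(s_0-s)p}}{|x-y|^{d+s_0p}}\le \frac{D^{(s_0-s)p}}{|x-y|^{d+s_0p}}\le \frac{\max\{1,D^{s_0p}\}}{|x-y|^{d+s_0p}}.
\]
This uses only that $|x-y|^{(s_0-s)p}\le D^{(s_0-s)p}$, which holds because the exponent $(s_0-s)p$ is nonnegative. It gives $\semin{u}{W^{s,p}(\Omega)}^p\le \max\{1,D^{s_0p}\}\,\semin{u}{W^{s_0,p}(\Omega)}^p$, a bound that is finite by assumption and independent of $s$. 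Multiplying by $s/2$ and letting $s\to0^+$ yields the claim.

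There is essentially no obstacle here. The only point to check is that the constant stays bounded uniformly in $s\in(0,s_0]$, whether $D\le1$ or $D>1$, and taking $\max\{1,D^{s_0p}\}$ handles both cases. One could alternatively split $\Omega\times\Omega$ into the regions $\{|x-y|<1\}$ and $\{|x-y|\ge1\}$. On the first region one uses $|x-y|^{-sp}\le|x-y|^{-s_0p}$. On the second, the integrand is bounded by $2^{p-1}(|u(x)|^p+|u(y)|^p)$, which is integrable since $u\in L^p(\Omega)$ and $\Omega$ is bounded. The single-line bound above makes this split unnecessary.
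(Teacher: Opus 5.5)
Your proof is correct and is the same argument as the paper's: pull out the factor $|x-y|^{p(s_0-s)}\le \mathrm{diam}(\Omega)^{p(s_0-s)}$ to bound the $W^{s,p}(\Omega)$ seminorm by a uniform multiple of the $W^{s_0,p}(\Omega)$ seminorm, and then let the prefactor $s$ send it to zero. Your exponent $p(s_0-s)$ is the right one; the paper's display has a stray $d$ in that exponent, which is a harmless typo.
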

\begin{proof}
The proof is a direct computation: for every $s<s_0$, we have
\begin{equation*}
    \begin{split}
        s\int_\O\int_\O\frac{\abs{u(x)-u(y)}^p}{\abs{x-y}^{d+sp}}dxdy&=s\int_\O\int_\O\frac{\abs{u(x)-u(y)}^p}{\abs{x-y}^{d+s_0p}}\abs{x-y}^{d+p(s_0-s)}dxdy\\
        &\leq \mathrm{diam}(\O)^{d+p(s_0-s)}s[u]_{W^{s_0,p}(\O)}^p\to0,
    \end{split}
\end{equation*}
which yields the thesis.
\end{proof}
\begin{remark}
\label{rk:seminormalimitati}
The same argument as in Lemma \ref{lemma:Vanishes} also yields that
\begin{equation*}
    \lim_{s\to 0^+}s\int_{\O^1}\int_{\O^2}\frac{\abs{u(x)-u(y)}^p}{\abs{x-y}^{d+sp}}dxdy=0,
\end{equation*}
for every pair $\O^1,\O^2$ of bounded domains in $\R^d$.
\end{remark}
We now show that, if $s_0$ is small enough, and $f$ and $g$ have finite mass at infinity, then the characteristic function of $\Omega$ has finite $s_0$-Gagliardo seminorm with respect to the measure
\begin{equation}\label{eq:def misura mista}
    \mu_{f,g}=f(x)\mathcal{L}^d_{\mid\O}+g(x)\mathcal{L}^d_{\mid\O^c}.
\end{equation}

\begin{lemma}
\label{lemma:product}
Let $p\geq1$, $f\in\elleom q$ for some $q>1$, $g\in\elle1_{\text{loc}}(\Rd)$, with $f,g\geq 0$, and assume that $\alpha_p(g)\in\R$. Then, there exists $s_0\in (0,1)$ such that
\begin{equation}\label{eq:chiOmega Sobolev}
    \int_{\R^d}\int_{\R^d}\frac{\abs{\chi_{\Omega}(x)-\chi_{\Omega}(y)}^p}{\abs{x-y}^{d+s_0p}}\,d\mu_{f,g}(y)\,d\mu_{f,g}(x)<+\infty,
\end{equation}
where $\mu_{f,g}$ is the measure defined in \eqref{eq:def misura mista}.
\end{lemma}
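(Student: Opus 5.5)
The integrand vanishes unless exactly one of $x,y$ lies in $\O$. By symmetry, the double integral in \eqref{eq:chiOmega Sobolev} therefore reduces to
\[
2\int_\O\int_{\O^c}\frac{g(y)f(x)}{\abs{x-y}^{d+s_0p}}\,dy\,dx .
\]
The plan is to fix a large radius $R$, with $\O\subset B_{R/2}(0)$, and split the inner integral into a near part over $\O^c\cap B_R(0)$ and a far part over $B_R^c(0)$. Each part is then bounded separately for a suitably small $s_0$.

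\textbf{Far part.} For $x\in\O$ and $\abs{y}\ge R$ we have $\abs{x-y}\ge\abs{y}/2$. Hence
\[
\int_\O f(x)\int_{B_R^c(0)}\frac{g(y)}{\abs{x-y}^{d+s_0p}}\,dy\,dx\le 2^{d+p}\norma{f}{\elle1(\O)}\int_{B_R^c(0)}\frac{g(y)}{\abs{y}^{d+s_0p}}\,dy .
\]
The factor $\norma{f}{\elle1(\O)}$ is finite because $\O$ is bounded and $q>1$. Since $\alpha_p(g)\in\R$ and $g\ge0$, the quantity $s\int_{B_1^c}g\abs{y}^{-d-sp}$ converges, so it is finite for all small $s$. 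In particular the tail integral is finite for some small $s_0$; enlarging $R$ from $1$ costs only a locally integrable piece, as in Step 1 of Lemma \ref{lemma:lualphau}.

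\textbf{Near part.} Here $g\in\elle1(B_R)$, and the difficulty is the singularity of the kernel at $\partial\O$. I would bound this term by Hölder in $x$. Write $q'$ for the conjugate exponent of $q$. Then
\[
\int_\O f(x)\,h(x)\,dx\le\norma{f}{\elle q(\O)}\,\norma{h}{\elle{q'}(\O)},\qquad h(x):=\int_{\O^c\cap B_R}\frac{g(y)}{\abs{x-y}^{d+s_0p}}\,dy .
\]
By Minkowski's integral inequality,
\[
\norma{h}{\elle{q'}(\O)}\le\int_{\O^c\cap B_R}g(y)\Big(\int_\O\abs{x-y}^{-(d+s_0p)q'}dx\Big)^{1/q'}dy .
\]
The inner integral, however, diverges when $(d+s_0p)q'\ge d$, which is always the case. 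So this naive route fails, and the near part is the main obstacle.

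To handle it, I would instead use that $\O$ is Lipschitz. For $y\notin\O$,
\[
\int_\O\abs{x-y}^{-(d+s_0p)}\,dx\lesssim\operatorname{dist}(y,\O)^{-s_0p},
\]
and symmetrically for $x\in\O$ with $\operatorname{dist}(x,\O^c)$. The Lipschitz boundary also gives $\L^d(\{x\in\O:\operatorname{dist}(x,\partial\O)<t\})\lesssim t$. Consequently $\operatorname{dist}(\cdot,\partial\O)^{-s_0p}$ lies in $\elle r(\O)$ for every $r$ with $s_0pr<1$, and likewise in $\elle r(B_R\setminus\O)$.

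Now bound the near part by
\[
\int_\O f(x)\,\operatorname{dist}(x,\partial\O)^{-s_0p}\cdot\big(\text{mass of }g\text{ in }B_R\big)\,dx ,
\]
applying Hölder with exponents $q$ and $q'$. Choosing $s_0<1/(pq')$ makes the distance factor lie in $\elle{q'}$. The remaining issue is that $g$ is merely $\elle1$. To handle it, split $\O^c\cap B_R$ into the region within distance $\operatorname{dist}(x,\partial\O)/2$ of $x$ and its complement, in the spirit of the classical proof that $\chi_\O\in W^{s,p}$ for $s<1/p$. If this splitting does not close the estimate, the fallback is to reduce to the case where $g$ is bounded near $\partial\O$, or to exploit the symmetric role of $f$ and $g$ through the weighted kernel bound above.
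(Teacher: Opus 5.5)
\textbf{Verdict: there is a genuine gap, and it cannot be closed, because the statement is false as posed.}

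Your reduction to $2\int_\O f(x)\int_{\O^c}g(y)\abs{x-y}^{-d-s_0p}\,dy\,dx$ and your far-field estimate are fine. You also correctly locate the difficulty in the part of $\O^c$ adjacent to $\partial\O$. But that part is never proved.

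Write $\delta_x=\operatorname{dist}(x,\partial\O)$. Your bound of $h(x)$ by $\delta_x^{-s_0p}$ times the mass of $g$ in $B_R$ is false. The exponent $-s_0p$ only appears after integrating the kernel against a \emph{bounded} density:
\[
\int_{B^c_{\delta_x}(x)}\abs{x-y}^{-d-s_0p}\,dy=\frac{d\omega_d}{s_0p}\,\delta_x^{-s_0p}.
\]
For $g$ merely integrable, the best pointwise bound is $h(x)\le\delta_x^{-d-s_0p}\norma{g}{\elle1(B_R)}$, and this is not integrable on $\O$. The splitting and the ``fallbacks'' that follow are not carried out.

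If $g$ were bounded on $B_R\setminus\O$, your first fallback would be a complete proof. Indeed, $h(x)\le\norma{g}{\elle\infty(B_R\setminus\O)}\frac{d\omega_d}{s_0p}\delta_x^{-s_0p}$. Since $\L^d(\{x\in\O:\delta_x<t\})\lesssim t$ for Lipschitz $\O$, we have $\delta^{-s_0p}\in\elle{q'}(\O)$ when $s_0pq'<1$, and Hölder finishes. Higher integrability of $g$ alone would not suffice. For example, $f(x)=\abs{x}^{-3/5}$ on $\O=(-1,0)$ and $g(y)=y^{-3/5}\chi_{(0,1)}(y)$ are both in $\elle{3/2}$, yet the integral diverges for every $s_0>0$ by homogeneity.

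No argument can close this step under the sole assumption $g\in\elle1_{\text{loc}}(\Rd)$, because the statement is then false. Take $d=1$, $\O=(-1,1)$, $f\equiv1$, and $g(y)=\chi_{(1,3/2)}(y)\,(y-1)^{-1}\abs{\log(y-1)}^{-2}$.
\begin{itemize}
\item $g\ge0$ and $\int g=1/\log 2<\infty$.
\item $\alpha_p(g)=0$, since $g$ has compact support.
\item For $y=1+t$ with $t$ small,
\[
\int_{-1}^1\abs{x-y}^{-1-s_0p}\,dx=\frac{1}{s_0p}\big(t^{-s_0p}-(2+t)^{-s_0p}\big)\ge c\,t^{-s_0p}.
\]
\item Hence the double integral is bounded below by $c\int_0^{t_0}t^{-1-s_0p}\abs{\log t}^{-2}\,dt=+\infty$, for every $s_0>0$.
\end{itemize}
An analogous construction, with $\operatorname{dist}(y,\O)$ in place of $y-1$, works in any dimension.

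The paper's proof breaks at exactly the point you flagged. After the same Hölder step in $x$, it asserts that $s\int_{B^c_{\delta_x}(x)}g(y)\abs{x-y}^{-d-sp}\,dy\le\alpha_p(g)+1$ for all small $s$, uniformly in $x\in\O$. The convergence to $\alpha_p(g)$ is only pointwise in $x$ and degenerates as $\delta_x\to0$. Already for $g\equiv1$ the left-hand side equals $\frac{d\omega_d}{p}\,\delta_x^{-sp}$, which is unbounded on $\O$ for every fixed $s>0$.
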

\begin{proof}
We rewrite the left-hand side of \eqref{eq:chiOmega Sobolev} as
\begin{equation*}
    2\int_\O\int_{\O^c}\frac{g(y)}{\abs{x-y}^{d+s_0p}}dy\,f(x)dx.
\end{equation*}
By the H\"older inequality, we obtain
\begin{equation*}
    \begin{split}
        \int_\O\int_{\O^c}\frac{g(y)}{\abs{x-y}^{d+s_0p}}dy\,f(x)dx\leq \left(\int_\O\left(\int_{\O^c}\frac{g(y)}{\abs{x-y}^{d+s_0p}}dy\right)^{q'}\,dx\right)^{\frac{1}{q'}}\cdot\left(\int_\O f(x)^qdx\right)^{\frac{1}{q}},
    \end{split}
\end{equation*}
where $q'<\infty$ is the H\"older conjugate exponent of $q$. We only need to show that the first integral is finite. For $x\in\O$ let $\delta_x=\mathrm{dist}(x,\partial \O)$. Then,
\begin{equation*}
    \begin{split}
        \int_{\O^c}\frac{g(y)}{\abs{x-y}^{d+s_0p}}\,dy&\leq\frac{1}{s}\int_{B_{\delta^c_x(x)}}\frac{s\,g(y)}{\abs{x-y}^{d+sp}}\abs{x-y}^{p(s-s_0)}\,dy\\
        &\leq \frac{1}{s\delta_x^{p(s_0-s)}}\int_{B_{\delta^c_x(x)}}\frac{s\,g(y)}{\abs{x-y}^{d+sp}}\,dy\\
        &\leq \frac{1}{s\delta_x^{p(s_0-s)}}(\alpha(g)+1)
    \end{split}
\end{equation*}
for $s<s_0$ sufficiently small. Integrating now with respect to $x$ in $\O$, we obtain
\begin{equation*}
    \begin{split}
        \int_\O\bigg(\int_{\O^c}\frac{g(y)}{\abs{x-y}^{d+s_0p}}&dy\bigg)^{q'}dx\leq \left(\frac{\alpha(g)+1}{s}\right)^{q'}\int_\O\frac{1}{\delta_x^{pq'(s_0-s)}}\,dx\\
        &=\left(\frac{\alpha(g)+1}{s}\right)^{q'}\int_0^{\mathrm{diam}(\O)}\frac{1}{r^{pq'(s_0-s)}}\Haus(\{x\in\O:\delta_x=r\})dr\\
        &\leq\left(\frac{\alpha(g)+1}{s}\right)^{q'}\int_0^{\mathrm{diam}(\O)}\frac{1}{r^{pq'(s_0-s)}}\Haus(\{x\in\O:\delta_x=r\})dr.
    \end{split}
\end{equation*}
By \cite[Section 3.2, Corollary 2]{Kraft2016}, there exists $C(\O)>0$ such that 
\[
\mathcal{H}^{d-1} \left\{ x\in \O:d(x,\partial \O)=r\right\}\leq C(\O)\qquad\text{ for every }r\in[0,\mathrm{diam(\O)}].
\]
Hence, we can estimate:
\begin{equation*}
      \int_\O\bigg(\int_{\O^c}\frac{g(y)}{\abs{x-y}^{d+s_0p}}dy\bigg)^{q'}dx\leq \left(\frac{\alpha(g)+1}{s}\right)^{q'}C(\O)\int_0^{\mathrm{diam}(\O)}\frac{1}{r^{pq'(s_0-s)}}dr<+\infty
\end{equation*}
if $s_0$ is sufficiently small. This concludes the proof.
\end{proof}

We conclude this subsection with an application of Proposition \ref{prop:HanPinamonti}.

\begin{lemma}
\label{lemma:alpha-int}
Let $f\in\elleom q$ for some $q>1$, $g\in L^1_{loc}(\Rd)$, with $f,g\geq 0$, and assume furthermore that $\alpha_p(g)$ exists and is finite. Then, there holds:
\begin{equation}
    \label{eq:lemmaalphaint}
     \lim_{s\to0^+} s\io\int_{\O^c}\frac{f(x)g(y)}{\abs{x-y}^{d+sp}}\,dy\,dx
     =
     \alpha_p(g)\int_{\O}f(x)dx.
\end{equation}
\end{lemma}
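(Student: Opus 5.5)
The paper says this is an application of Proposition \ref{prop:HanPinamonti}, and that is the plan. The idea is to encode $f$ and $g$ into the measure $\mu_{f,g}$ from \eqref{eq:def misura mista} and to test the Proposition against the function $u=\chi_\Omega$.

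\textbf{Setup.} Work on the metric measure space $(\Rd,|\cdot|,\mu_{f,g})$ with kernels $\rho_n(x,y)=s_n\abs{x-y}^{-(d+s_np)}$ along an arbitrary sequence $s_n\to0^+$. Since $\abs{\chi_\O(x)-\chi_\O(y)}^p$ is nonzero only when exactly one of $x,y$ lies in $\O$, we get
\[
\mathcal{E}_n(\chi_\O)=2s_n\io\int_{\O^c}\frac{f(x)g(y)}{\abs{x-y}^{d+s_np}}\,dy\,dx .
\]
Moreover $\|\chi_\O\|^p_{L^p(\mu_{f,g})}=\io f\,dx$, which is finite because $f\in\elleom q$ and $\O$ is bounded. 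Lemma \ref{lemma:product} gives $\mathcal{E}_{n_0}(\chi_\O)<+\infty$ once $s_{n_0}$ is small. So if the hypotheses hold with $L=\alpha_p(g)$, the Proposition yields $\mathcal{E}_n\to 2\alpha_p(g)\io f$, which is \eqref{eq:lemmaalphaint}. Since the sequence is arbitrary, the limit in $s$ follows.

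\textbf{Hypothesis \ref{hA}.} Fix $x$ and take $R$ larger than $\mathrm{dist}(x,\O)+\mathrm{diam}(\O)$. Then $B_R^c(x)\subset\O^c$, so
\[
\int_{B_R^c(x)}\rho_n(x,y)\,d\mu_{f,g}(y)=s_n\int_{B_R^c(x)}\frac{g(y)}{\abs{x-y}^{d+s_np}}\,dy .
\]
By Corollary \ref{lemma:lu-positive}, applied to $g\geq0$ (so $g^-=0$ and $\alpha_p(g^+)=\alpha_p(g)$), this tends to $\alpha_p(g)$ as $n\to\infty$ and then $R\to\infty$, independently of $x$.

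\textbf{Hypothesis \ref{hB}.} This is needed only for $u=\chi_\O$. The integral over $\{\abs{x-y}<R\}$ involves only $x\in\O$ and $y\in\O^c\cap B_{R+\mathrm{diam}\O}$, a bounded region. On that region, for $s_n<s_{n_0}$,
\[
s_n\abs{x-y}^{-(d+s_np)}\le s_n\,(R+\mathrm{diam}\O)^{p(s_{n_0}-s_n)}\abs{x-y}^{-(d+s_{n_0}p)} .
\]
This is the argument of Lemma \ref{lemma:Vanishes} and Remark \ref{rk:seminormalimitati}. Combined with $\mathcal{E}_{n_0}(\chi_\O)<\infty$, it shows the localized energy is $O(s_n)\to0$. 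The Proposition states \ref{hB} for all $u$, but its proof only uses it for the $u$ under consideration; we note this in the same spirit as Remark \ref{rem:CweakHanPinam}.

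\textbf{Hypothesis \ref{hC}, the main obstacle.} A uniform bound in $x$ fails, because $g$ is only locally integrable and has mass at infinity. We therefore use Remark \ref{rem:CweakHanPinam}, since $\chi_\O$ has compact support $\overline\O$. For $y\in\overline\O$ we need
\[
\limsup_n s_n\int_{B_R^c(y)}\frac{d\mu_{f,g}(x)}{\abs{x-y}^{d+s_np}}\le C(R).
\]
We split this into two pieces. The $f$-part over $\O\setminus B_R(y)$ is bounded by $s_nR^{-d}\|f\|_{L^1(\O)}\to0$. For the $g$-part, we compare with the centre $0$ uniformly in $y\in\overline\O$, as in Step 2 of Lemma \ref{lemma:lualphau}. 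The estimates for the terms $I$ and $II$ there depend on $y$ only through $\abs{y}\le\sup_{\overline\O}\abs{\cdot}$, so they are uniform on $\overline\O$. This gives a bound of the form $\alpha_p(g)+1+o(1)$ for $R$ large, which is the required $C(R)$. The delicate point is checking this uniformity for $y$ near $\partial\O$ and handling small $R$, where $B_R^c(y)$ is only restricted to within a bounded region. The local piece there is controlled by $s_nR^{-d-p}\int_{B_{R'}}g\to0$.
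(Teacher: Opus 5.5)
Your proposal is correct and follows the paper's proof essentially step by step. You encode $f,g$ in $\mu_{f,g}$ and apply Proposition~\ref{prop:HanPinamonti} to $\chi_\O$. You obtain \ref{hB} from Lemma~\ref{lemma:product} together with the factor $R^{p(s_0-s_n)}$. You replace \ref{hC} by Remark~\ref{rem:CweakHanPinam}, using the Step~2 estimates of Lemma~\ref{lemma:lualphau}, which are uniform over $\overline\O$.

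Two of your details are actually cleaner than the paper's. Your justification of \ref{hA} via Corollary~\ref{lemma:lu-positive} applied to $g$, after noting $B_R^c(x)\subset\O^c$ for large $R$, is more accurate than the paper's appeal to Remark~\ref{rk:alpha1}. Your treatment of the $f$-part in \ref{hC} also fills in a detail the paper leaves implicit.
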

\begin{proof}
Let $(s_n)_n\subset(0,1)$ be any sequence such that $s_n\to0^+$. First, notice that the integral in the right-hand side of \eqref{eq:lemmaalphaint} can be rewritten, for $s=s_n$, as
\begin{equation*}
    \frac{1}{2}\int_{\R^d}\int_{\R^d}\abs{\chi_{\Omega}(x)-\chi_{\Omega}(y)}^p\rho_n(x,y)\,d\mu_{f,g}(y)d\mu_{f,g}(x),
\end{equation*}
where $\mu_{f,g}$ is is the measure defined in \eqref{eq:def misura mista} and 
$$\rho_n(x,y):=\frac{s_n}{|x-y|^{d+s_np}}\qquad\text{for every }n\in \mathbb{N}.$$
We start by showing that Assumptions \ref{hA} and \ref{hB} of Proposition \ref{prop:HanPinamonti} are fulfilled. Indeed, Assumption \ref{hA} holds due to Remark \ref{rk:alpha1}. As for Assumption \ref{hB}, by Lemma \ref{lemma:product}, we know that there exists $s_0\in(0,1)$, for which
\begin{equation*}
    \int_{\R^d}\int_{\R^d}\frac{\abs{\chi_{\Omega}(x)-\chi_{\Omega}(y)}^p}{\abs{x-y}^{d+s_0p}}\,g(y)dy\,f(x)dx<+\infty.
\end{equation*}
Then, whenever $s_n<s_0$ we have
\begin{equation*}
    \begin{split}
        &s_n\iint_{\{(x,y):0<|x-y|<R\}}\frac{\abs{\chi_{\Omega}(x)-\chi_{\Omega}(y)}^p}{\abs{x-y}^{d+s_np}}\,g(y)dy\,f(x)dx\\
        &\quad\leq s_nR^{(s_0-s_n)p} \int_{\R^d}\int_{\R^d}\frac{\abs{\chi_{\Omega}(x)-\chi_{\Omega}(y)}^p}{\abs{x-y}^{d+s_0p}}\,g(y)dy\,f(x)dx,
    \end{split}
\end{equation*}
which converges to $0$ as $s_n\to0^+$. 
As for Assumption \ref{hC}, this is not fulfilled in general without further requirements on $f$ and $g$. However, since the function $\chi_\Omega$ has compact support, recalling Remark \ref{rem:CweakHanPinam}, it is enough to show that for any $R>0$ there exists a constant $C=C(R)$ such that, for every $x\in \O$,
\begin{equation}
\label{eq:bdR}
    \limsup_{n\to \infty}s_n\int_{B_R^c(x)}\frac{g(y)}{\abs{x-y}^{d+s_np}}dy\leq C(R).
\end{equation}
Following the proof of \Cref{lemma:lualphau}, we obtain that, for $R>0$ big enough (depending only on $\O$), there exist $0<\delta_1<\delta _2$ (depending on $R$) such that $B_R(x)\Delta B_R(0)\subset B_{\delta_2}(0)\cap B_{\delta_1}^c(x)$ and 
the following estimate holds:
\begin{equation*}
    \begin{split}
        & \abs{  s_n\int_{B_R^c(x)}\frac{g(y)}{\abs{x-y}^{d+s_np}}dy-   s_n\int_{B_R^c(0)}\frac{g(y)}{\abs{y}^{d+s_np}}dy}\\
        &\hspace{2cm}\leq  \frac{s_n}{\delta_1^{d+s_np}}\int_{B_{\delta_2(0)}}g(y)\,dy\, +  \frac{\abs{x}(d+s_np)}{R}\,s_n\int_{B_R^c(0)}\frac{g(y)}{\abs{y}^{d+s_np}}\,dy.
  \end{split}
\end{equation*}    
Therefore,
\begin{equation*}
    \begin{split}
        &\limsup_{n\to \infty} \abs{  s_n\int_{B_R^c(x)}\frac{g(y)}{\abs{x-y}^{d+s_np}}dy-   s_n\int_{B_R^c(0)}\frac{g(y)}{\abs{y}^{d+s_np}}dy} \leq \frac{d\,\mathrm{diam(\O)}\alpha_p(g)}{R}.
  \end{split}
\end{equation*} 
By Lemma \ref{lemma:lualphau}, this entails
\begin{equation*}
    \begin{split}
        &\limsup_{n\to \infty}s_n\int_{B_R^c(x)}\frac{g(y)}{\abs{x-y}^{d+s_np}}dy\leq   \alpha_p(g)+ \frac{d\,\mathrm{diam(\O)}\alpha_p(g)}{R},
  \end{split}
\end{equation*} 
which, in turn, gives \eqref{eq:bdR}. We can now apply Proposition \ref{prop:HanPinamonti} to obtain
\begin{equation*}
    \frac{1}{2}\int_{\R^d}\int_{\R^d}\abs{\chi_{\Omega}(x)-\chi_{\Omega}(y)}^p\rho_n(x,y)\,d\mu_{f,g}(y)d\mu_{f,g}(x)=\alpha_p(g)\int_{\O}f(x)dx,
\end{equation*}
which concludes the proof.
\end{proof}

\subsection{Proof of Theorem \ref{Theo:MS pointwise:euclidean}}
We now prove Theorem \ref{Theo:MS pointwise:euclidean}. Note that we will apply the previous lemmata with $f,g\in\{1,u^+,u^-,u^2\}$, with $u\in H^{s_0}(\Qom)$ (and thus in $H^{s_0}(\O)$) for some $s_0\in(0,1)$, and such that $\alpha(u^+),\alpha(u^-),\alpha(u^2)$ exist and are finite. 
This, by fractional Sobolev embedding and since $u\in\elle2_\text{loc}(\Rd)$, ensures that all the assumptions of the lemmata above are satisfied.

\begin{proof}[Proof of Theorem \ref{Theo:MS pointwise:euclidean}]
    By expanding the square in the Gagliardo seminorm on $Q_\O$, and separating local and mixed contributions, we rewrite
    \begin{equation}
    \begin{split}
        \frac{s}{2}\semin{u}{H^s(Q_\Omega)}^2=&\frac{s}{2}\semin{u}{H^s(\Omega)}^2
        +s\io\int_{{\Omega^c}} \frac{\abs{u(x)}^2}{\abs{x-y}^{d+2s}}dydx\\
        &-2s\io\int_{{\Omega^c}} \frac{u(x)u(y)}{\abs{x-y}^{d+2s}}dydx
        +s\io\int_{{\Omega^c}} \frac{\abs{u(y)}^2}{\abs{x-y}^{d+2s}}dydx\\
        &:=I+II+III+IV     
    \end{split}
    \end{equation}
    We proceed by analyzing each of the above summands separately. First, by Lemma \ref{lemma:Vanishes}, $\lim_{s\to 0}I=0$.
    In view of Lemma \ref{lemma:alpha-int} (with $f=u^2$ and $g=1$), we find
    \begin{equation}
        \lim_{s\to0^+}   II=d\o_d\int_{\O}u(x)^2\,dx.
    \end{equation}
    To estimate III, recall $u^+(x)=u(x)\lor 0$ and $u^-(x)=-(u(x)\land 0)$. Clearly, 
    \begin{equation*}
        \begin{split}
            u(x)u(y)&=(u^+(x)-u^-(x))(u^+(y)-u^-(y))\\
            &=u^+(x)u^+(y)-u^+(x)u^-(y)-u^-(x)u^+(y)+u^-(x)u^-(y).
        \end{split}
    \end{equation*}
    Consider, for example, the term
    \begin{equation*}
         2s\io\int_{{\Omega^c}} \frac{u^+(x)u^-(y)}{\abs{x-y}^{d+2s}}dydx,
    \end{equation*}
    all the others being handled in a similar way. Applying Lemma \ref{lemma:alpha-int} (with $f=u^+$ and $g=u^-$), we find
    \begin{equation*}
 \lim_{s\to0^+}2s\io\int_{{\Omega^c}} \frac{u^+(x)u^-(y)}{\abs{x-y}^{d+2s}}dydx=2\alpha(u^-)\int_{\O}u^+(x)\,dx.
    \end{equation*}
    Thus,
    \begin{equation*}
        \begin{split}
            \lim_{s\to0^+}III & =-\alpha(u^+)\int_{\O}u^+(x)\,dx+\alpha(u^-)\int_{\O}u^+(x)\,dx\\
            &+\alpha(u^+)\int_{\O}u^-(x)\,dx-\alpha(u^-)\int_{\O}u^-(x)\,dx\\
            &=-\alpha(u)\int_{\O}u(x)\,dx.
        \end{split}
    \end{equation*}
    Eventually, a further application of Lemma \ref{lemma:alpha-int} (with $f=1$ and $g=u^2$), entails
    \begin{equation}
        \label{Eq:MSExtEuProof:IV}
        \lim_{s\to0^+}IV=\alpha(u^2)\L^d(\O),
    \end{equation}
    which concludes the proof.
\end{proof}

% \hrule

% \begin{example}(Asymptotics of the Euclidean fractional perimeter, see \cite{Quartet})
% Let $\Omega$ be a bounded domain in $\R^d$, let $E\in \R^d$ such that $\alpha_2(E)$ exists and define
% \begin{equation}
%     \operatorname{Per}_s(E;\Omega)=\frac12\semin{\chi_E}{H^s(Q_\Omega)}^2
% \end{equation}
%     The application of \Cref{Theo:MS pointwise:euclidean} to the function $u=\chi_E$ on the space $(\Omega,d_E,\mathcal{L}^d\,)$ leads to
%     \begin{equation}
%     \begin{split}
%             \lim_{s\to0^+}s\operatorname{Per}_s(E;\Omega)=&\alpha(\chi_E^2)\mathcal{L}^d(\Omega)+d\omega_d \io \chi_E^2\,d\mathcal{L}^d-2\alpha(E)\io\chi_E\,d\mathcal{L}^d\\
%             =& \alpha(E)\mathcal{L}^d(\Omega)+d\omega_d \mathcal{L}^d(E\cap \Omega)-2\alpha(E)\mathcal{L}^d(E\cap \Omega)\\
%             =& (d\omega_d-\alpha(E)) \mathcal{L}^d(E\cap \Omega)+\alpha(E)\mathcal{L}^d(\Omega\setminus E)
%     \end{split}
%     \end{equation}
%     where $\alpha(E)=\alpha(\chi_E)$. This is the result of \cite{Quartet}.
% \end{example}

\subsection{Interaction energies}
We now discuss a particular case of Theorem \ref{Theo:MS pointwise:euclidean} for functions which admit limits at infinity in every direction.
In particular, we will show that, assuming further hypotheses on the behavior at infinity of the functions involved, one can rewrite the formula in a more concise and natural fashion. This will also clarify the nature of the terms including $\alpha(u)$ (or $\alpha(u^2)$) in \eqref{Eq:intro:ExtMS:pw:eucl}.

We start by pointing out that, for functions $u=u(r,\theta)$ admitting limit as $r\to+\infty$ along every half-line starting from the origin, the value $\alpha(u)$ can be characterized as the integral of such limits along the angular component.

\begin{lemma}\label{lemma: integral at infinity}
    Let $u\colon [0,+\infty)\times \Sd\to\R$ be a bounded and measurable function such that for every $\theta\in\Sd$ the limit
    \[u_\infty(\theta)=\lim_{r\to+\infty} u(r,\theta)\]
    exists and, moreover, 
    \[\lim_{r\to+\infty}\norma{u(r,\cdot)-u_\infty}{\elle\infty(\Sd)}=0.\]
    Then
    \[
    \alpha_p(u)=\frac1p\int_\Sd u_\infty(\theta)d\Haus(\theta)
    \]
\end{lemma}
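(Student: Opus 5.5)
We compute directly in polar coordinates. The plan is to compare $u$ with its radial limit $u_\infty$, for which the integral can be evaluated exactly, and to show that the error vanishes after multiplication by $s$.

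Writing $y=r\theta$, Definition \ref{Def:alphau} asks for the limit as $s\to0^+$ of
\[
s\int_{B_1^c(0)}\frac{u(y)}{\abs{y}^{d+sp}}\,dy = s\int_1^{+\infty}\int_\Sd u(r,\theta)\,r^{-1-sp}\,d\Haus(\theta)\,dr .
\]
We split $u(r,\theta)=u_\infty(\theta)+\big(u(r,\theta)-u_\infty(\theta)\big)$. Note that $u_\infty$ is measurable, being a pointwise limit of measurable functions, and that it is bounded by $\sup\abs{u}$. Hence the main term is well defined and can be computed exactly, just as in Remark \ref{rk:alpha1}:
\[
s\int_1^{+\infty} r^{-1-sp}\,dr\int_\Sd u_\infty\,d\Haus=\frac1p\int_\Sd u_\infty(\theta)\,d\Haus(\theta).
\]
This value does not depend on $s$.

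For the error term, fix $\e>0$. By the uniform convergence assumption we can choose $R>1$ such that $\norma{u(r,\cdot)-u_\infty}{\elle\infty(\Sd)}\le\e$ for all $r\ge R$. We then split the radial integral into $[1,R]$ and $[R,+\infty)$.

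On $[1,R]$ we use boundedness: $\abs{u-u_\infty}\le 2\sup\abs{u}$, so this piece is bounded by
\[
2s\sup\abs{u}\,d\omega_d\int_1^R r^{-1-sp}\,dr\le 2s\sup\abs{u}\,d\omega_d\log R,
\]
which tends to $0$ as $s\to0^+$.

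On $[R,+\infty)$ we use the uniform estimate: this piece is bounded by
\[
\e\, d\omega_d\, s\int_R^{+\infty} r^{-1-sp}\,dr=\e\,d\omega_d\,\frac{R^{-sp}}{p}\le \frac{\e\, d\omega_d}{p}.
\]

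Putting the pieces together, the $\limsup$ as $s\to0^+$ of the distance between $s\int_{B_1^c(0)}u(y)\abs{y}^{-d-sp}\,dy$ and $\frac1p\int_\Sd u_\infty\,d\Haus$ is at most $\e\,d\omega_d/p$. Since $\e>0$ is arbitrary, the limit defining $\alpha_p(u)$ exists and equals $\frac1p\int_\Sd u_\infty\,d\Haus$.

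There is no real obstacle in this argument. The only points to check are the measurability of $u_\infty$, which is needed to use Fubini in polar coordinates, and the observation that the compact radial range $[1,R]$ contributes only $O(s\log R)$. The latter is exactly the mechanism by which the kernel concentrates its mass at infinity.
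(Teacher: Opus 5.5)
Your argument is correct and is essentially the paper's proof: polar coordinates, the uniform bound $\abs{u(r,\cdot)-u_\infty}\le\e$ for $r\ge R$, the exact value $s\int_R^{+\infty}r^{-1-sp}\,dr=R^{-sp}/p\to 1/p$, and then $\e\to0^+$. The differences are minor: you estimate the annulus $1\le r\le R$ explicitly, whereas the paper works directly on $B_R^c(0)$ and implicitly relies on Step 1 of \Cref{lemma:lualphau}. You also treat signed $u$ in one step via $u=u_\infty+(u-u_\infty)$ rather than splitting into $u^\pm$.
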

\begin{proof}
    Let $\e>0$ and $R>0$ be such that
    \[
    \norma{u(r,\cdot)-u_\infty}{\elle\infty(\Sd)}\leq \e\quad\forall r\geq R.
    \]
    Assume first that $u\geq 0$. Then, by Tonelli's theorem we can write 
    \[\int_{B_R^c(0)}\frac{u(y)}{\abs{y}^{d+sp}}dy= \int_R^{+\infty}\int_{\Sd}\frac{u(r,\theta)}{r^{d+sp}}d\Haus(\theta)\, dr.\]
    Therefore we have
    \begin{equation}
        \begin{split}
            \limsup_{s\to0^+}\int_{B_R^c(0)}\frac{u(y)}{\abs{y}^{d+sp}}dy
            &
            \leq \limsup_{s\to0^+}\int_R^{+\infty}\int_{\Sd}\frac{u_\infty(\theta)+\e}{r^{d+sp}}d\Haus(\theta)\, dr \\
            &\leq \Big(\int_\Sd u(\theta)d\Haus(\theta) +\e\Haus(\Sd)\Big)\limsup_{s\to0^+}\int_R^{+\infty}\frac1{r^{d+sp}}dr\\
            &\frac1p\Big(\int_\Sd u(\theta)d\Haus(\theta) +\e\Haus(\Sd)\Big),
        \end{split}
    \end{equation}
    and analogously: 
    \begin{equation}
        \begin{split}
            \liminf_{s\to0^+}\int_{B_R^c(0)}\frac{u(y)}{\abs{y}^{d+sp}}dy\geq \frac1p\Big(\int_\Sd u(\theta)d\Haus(\theta) -\e\Haus(\Sd)\Big).
        \end{split}
    \end{equation}
    Letting $\e\to0^+$, we get the thesis. In the general case it suffices to write $u=u^+-u^-$ and apply the previous step separately to $u^+$ and $u^-$. 
\end{proof}

\Cref{lemma: integral at infinity} corroborates the idea that the value $\alpha(u)$ only depends on the behaviour of $u$ at infinity, whenever such value is well-defined. This \say{integration at infinity} result is extended by the following corollary.
\begin{cor}\label{cor:composition alpha}
    Let $u$ satisfy the assumptions of \Cref{lemma: integral at infinity}. Then, for every continuous function $\psi:\R\to\R$, the value $\alpha_p(\psi(u))$ exists for every $p\geq 1$ and is given by
    \begin{equation}
        \alpha_p(\psi(u))=\frac1p 
        \int_\Sd \psi(u_\infty(\theta))d\Haus(\theta).
    \end{equation}
\end{cor}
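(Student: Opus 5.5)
The plan is to show that $v:=\psi\circ u$ itself satisfies the assumptions of \Cref{lemma: integral at infinity}, with limit function $v_\infty=\psi\circ u_\infty$. Once this is done, the lemma applied to $v$ gives directly
\[
\alpha_p(\psi(u))=\frac1p\int_\Sd v_\infty(\theta)\,d\Haus(\theta)=\frac1p\int_\Sd \psi(u_\infty(\theta))\,d\Haus(\theta)
\]
for every $p\geq1$, which is the statement.

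First, boundedness and measurability. Since $u$ is bounded, there is $M>0$ with $\abs{u}\leq M$ everywhere. Since $\psi$ is continuous, it is Borel, so $v=\psi\circ u$ is measurable. Moreover $\abs{v}\leq \max_{[-M,M]}\abs{\psi}<+\infty$, so $v$ is bounded.

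Second, pointwise limits. For every $\theta$, the convergence $u(r,\theta)\to u_\infty(\theta)$ and continuity of $\psi$ give $v(r,\theta)\to\psi(u_\infty(\theta))$. We also have $\abs{u_\infty}\leq M$, so all values of $u$ and $u_\infty$ lie in the compact interval $[-M,M]$.

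The only step that needs care is uniform convergence in $\theta$, and it follows from uniform continuity. By Heine–Cantor, $\psi$ is uniformly continuous on $[-M,M]$. Given $\e>0$, choose $\eta>0$ such that $\abs{\psi(a)-\psi(b)}\leq\e$ whenever $a,b\in[-M,M]$ and $\abs{a-b}\leq\eta$. Then choose $R$ such that $\norma{u(r,\cdot)-u_\infty}{\elle\infty(\Sd)}\leq\eta$ for all $r\geq R$. This gives $\norma{v(r,\cdot)-\psi\circ u_\infty}{\elle\infty(\Sd)}\leq\e$ for all $r\geq R$.

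If the $\elle\infty$ norm is read as an essential supremum, the same argument holds $\Haus$-a.e.: off a null set of $\theta$ one has $\abs{u(r,\theta)-u_\infty(\theta)}\leq\eta$, and the estimate on $v$ holds on the same set. This is all the proof of \Cref{lemma: integral at infinity} uses, because it only enters through an integral over $\Sd$.
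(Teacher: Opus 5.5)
Your proof is correct and is the intended argument. The paper states this corollary without proof, as an immediate consequence of \Cref{lemma: integral at infinity}, and what needs checking is exactly what you verify: $\psi\circ u$ is bounded and measurable, and it converges to $\psi\circ u_\infty$ uniformly in $\theta$ by the uniform continuity of $\psi$ on $[-M,M]$. Your final caveat about the essential supremum is not needed, because an $\Haus$-a.e.\ bound on $\Sd$ is precisely the $\elle\infty(\Sd)$ bound the lemma requires.
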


We can now exploit this fact to show that in the case of functions that admit a continuous extension to the horizon the extended Maz'ya-Shaposhnikova formula \eqref{Eq:intro:ExtMS:pw:eucl} admits a simpler integral form, where the interaction between the part of $u$ inside $\Omega$ and its radial limit at infinity $u_\infty$ can be appreciated.

\begin{theo}\label{Theo:MS:integrale doppio}
     Let $\Omega\subset\R^d$ be a bounded domain and let $u\in L^2_{loc}(\R^d)$ be such that $u\in H^{s_0}(\Qom)$ for some ${s_0\in(0,1)}$. Moreover, assume that $u_{\mid\O^c}$ satisfies the assumptions of \Cref{lemma: integral at infinity}. Then:
    \begin{equation}\label{eq:MS euclidean p=2 raggruppata}
        \lim_{s\to0^+}\frac{s}{2}[u]_{H^s(Q_\O)}=\frac12\int_\O\int_{\Sd}|u(x)-u_\infty (\theta)|^2d\Haus(\theta)\,dx
    \end{equation}
\end{theo}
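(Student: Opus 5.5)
The plan is to reduce the statement to Theorem \ref{Theo:MS pointwise:euclidean} and then regroup the right-hand side using Corollary \ref{cor:composition alpha}. (The left-hand side should be read as $\frac{s}{2}[u]^2_{H^s(\Qom)}$.)

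First, we check that the hypotheses of Theorem \ref{Theo:MS pointwise:euclidean} hold. The values $\alpha(u^+)$, $\alpha(u^-)$, $\alpha(u^2)$ depend only on $u_{\mid\O^c}$. Indeed, changing $u$ on the bounded set $\O$ changes $s\int_{B_1^c(0)} u(y)|y|^{-d-2s}\,dy$ by at most $s\int_{\O}|u|$, and this vanishes as $s\to0^+$ because $u\in L^1_{loc}$. Replacing $u$ on $\O$ by a bounded function, the modified function still satisfies the hypotheses of \Cref{lemma: integral at infinity}, since only the behaviour as $r\to\infty$ matters there. We may then apply Corollary \ref{cor:composition alpha} with $p=2$ and $\psi(t)=t^+$, $t^-$, $t^2$ (all continuous). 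This shows that these three masses exist, are finite (as $u$ is bounded on $\O^c$), and that
\[
\alpha(u^k)=\frac12\int_\Sd u_\infty(\theta)^k\,d\Haus(\theta),\qquad k=0,1,2.
\]
For $k=1$ we use $u=u^+-u^-$ and linearity; for $k=0$ this is Remark \ref{rk:alpha1}.

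Next, Theorem \ref{Theo:MS pointwise:euclidean} gives the limit
\[
\sum_{k=0}^2\binom2k(-1)^k\alpha(u^k)\int_\O u^{2-k}\,dx .
\]
Substituting the formula above, this becomes
\[
\frac12\int_\O\int_\Sd\Big(u(x)^2-2u(x)u_\infty(\theta)+u_\infty(\theta)^2\Big)\,d\Haus(\theta)\,dx .
\]
The integrand is exactly $\frac12|u(x)-u_\infty(\theta)|^2$.

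It remains to justify the use of Fubini's theorem. This is fine because $u\in L^2(\O)$, and $u_\infty$ is bounded, being a uniform limit of the bounded functions $u(r,\cdot)$.

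The only delicate point is the localization argument, namely that $\alpha(\psi(u))$ is insensitive to modifications of $u$ on $\O$. This requires $\psi(u)\in L^1_{loc}$, which holds for $\psi(t)=t^2$ since $u\in L^2_{loc}$. It is otherwise routine.
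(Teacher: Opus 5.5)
Your proposal is correct and follows the same route as the paper. You verify the hypotheses of Theorem \ref{Theo:MS pointwise:euclidean} via Corollary \ref{cor:composition alpha}, then regroup the binomial sum using $\alpha(u^k)=\frac12\int_\Sd u_\infty^k\,d\Haus$ into $\frac12\int_\O\int_\Sd|u(x)-u_\infty(\theta)|^2$. Your extra justifications, namely that the masses at infinity are insensitive to the values of $u$ on the bounded set $\O$ (so the hypothesis on $u_{\mid\O^c}$ alone suffices) and the Fubini step, are details the paper leaves implicit, and you handle them correctly.
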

\begin{proof}
    From \Cref{cor:composition alpha} we know that all the assumptions of \Cref{Theo:MS pointwise:euclidean} are satisfied. It follows that
    \[
        \lim_{s\to 0^+}\frac{s}{2}[u]^2_{H^s(\Qom)}=\sum_{k=0}^{2}\binom{2}{k}(-1)^k\alpha(u^k)\int_\Omega u^{2-k}(x)\,dx.
    \]
    Note that, by \Cref{cor:composition alpha}, the right-hand side can be written as
    \begin{equation}\begin{split}
    &\frac12\sum_{k=0}^{2}\binom{2}{k}(-1)^k\int_\Sd u_\infty^k(\theta)d\Haus(\theta)\int_\Omega u^{2-k}(x)\,dx\\
    &=\frac12\sum_{k=0}^{2}\binom{2}{k}(-1)^k\io\int_\Sd u^{2-k}(x)u_\infty^k(\theta)d\Haus(\theta)\,dx\\
    &=\frac12\io\int_\Sd\sum_{k=0}^{2}\binom{2}{k}(-1)^k u^{2-k}(x)u_\infty^k(\theta)d\Haus(\theta)\,dx\\
    &=\frac12\io\int_\Sd\abs{u(x)-
    u_\infty(\theta)
    }^2d\Haus(\theta)\,dx
    \end{split}
    \end{equation}
\end{proof}
\begin{remark}
    Observe that for any function $u\in C^\infty_c(\R^d)$, \eqref{eq:MS euclidean p=2 raggruppata} reduces to
    \begin{equation}
        \lim_{s\to0^+}\frac{s}{2}[u]_{H^s(\Qom)}^2=\frac{1}2\Haus(\Sd)\io|u(x)|^2dx,
    \end{equation}
    which is the classical Maz'ya-Shaposhnikova formula for $p=2$.
\end{remark}

\begin{remark}
    \Cref{Theo:MS:integrale doppio} also clarifies why the limit of the fractional perimeters as $s\to0^+$ may fail to be a measure, a phenomenon first observed in \cite{Quartet}. Indeed, unless one restricts to functions that vanish at infinity, the limiting functional does not behave like a norm, but rather has the structure of an interaction energy.
\end{remark}

\subsection{Beyond $p=2$} 
As the reader might have noticed, the proof of Theorem \ref{Theo:MS pointwise:euclidean} exploits the fact that $\abs{u(x)-u(y)}^2$ can be expanded by the Newton formula for binomials. In this subsection, we show that an analogous result can be proved for any $p\in\N$ even and the limitations to extending it for $p\in\N$ odd. Then, we discuss possible extensions to the case of a general $p\in(1,+\infty)$.

First of all, we sketch the proof of a version of Theorem \ref{Theo:MS pointwise:euclidean} for every even $p\in\N$.
\begin{cor}
\label{cor:p-even}
 \label{Theo:MS pointwise:euclidean-p-even}
    Let $p\in \N$ be even, and let $u\in L^p_{loc}(\R^d)$ be such that $u\in W^{s_0,p}(\Qom)$ for some ${s_0\in(0,1)}$.
    Assume that  $\alpha_p(u_k^+), \alpha_p(u_k^-), \alpha(u^k)\in \R$, for every $k\in\{1,\dots,p\}$.
    Then:
    \begin{equation}
        \label{Eq:intro:ExtMS:pw:eucl-p-even}
        \lim_{s\to 0^+}\frac{s}{2}[u]^p_{W^{s,p}(\Qom)}=\sum_{k=0}^{p}\binom{p}{k}(-1)^k\alpha_p(u^k)\int_\Omega u^{p-k}(x)\,dx,
    \end{equation}
\end{cor}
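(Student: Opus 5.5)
Since $p$ is even, $\abs{u(x)-u(y)}^p=(u(x)-u(y))^p$, so I would expand by the binomial formula, exactly as in the proof of Theorem \ref{Theo:MS pointwise:euclidean}. The first step is to use the decomposition of the seminorm on $\Qom$:
\[
\frac s2[u]^p_{W^{s,p}(\Qom)}=\frac s2[u]^p_{W^{s,p}(\O)}+s\io\int_{\O^c}\frac{(u(x)-u(y))^p}{\abs{x-y}^{d+sp}}\,dy\,dx .
\]
The interior term vanishes as $s\to0^+$ by Lemma \ref{lemma:Vanishes}, since $u\in W^{s_0,p}(\O)$. Expanding the mixed term gives
\[
\sum_{k=0}^p\binom pk(-1)^k\, s\io\int_{\O^c}\frac{u^{p-k}(x)\,u^k(y)}{\abs{x-y}^{d+sp}}\,dy\,dx ,
\]
and it then suffices to show that each summand converges to $\alpha_p(u^k)\io u^{p-k}\,dx$.

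For each fixed $k$, I would write $u^{p-k}(x)=(u^{p-k})^+(x)-(u^{p-k})^-(x)$ and $u^k(y)=(u^k)^+(y)-(u^k)^-(y)$. Here I read the hypothesis $\alpha_p(u_k^\pm)\in\R$ with $u_k=u^k$. This produces four integrals with nonnegative $f=(u^{p-k})^\pm$ and $g=(u^k)^\pm$, and to each I would apply Lemma \ref{lemma:alpha-int}. Recombining the four limits by linearity yields $\alpha_p(u^k)\io u^{p-k}$, because $\alpha_p$ is linear on functions whose positive and negative parts have finite mass at infinity. For even $k$ we have $u^k\ge0$, so only $g=u^k$ appears, and $\alpha_p(u^k)$ is among the hypotheses. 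The cases $k=0$ and $k=p$ use $g=1$ (Remark \ref{rk:alpha1}) and $f=1$, respectively.

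The step I expect to be the main obstacle is checking the hypotheses of Lemma \ref{lemma:alpha-int}. The requirement $g\in L^1_{loc}$ is immediate from $u\in L^p_{loc}$. The harder requirement is $f=(u^{p-k})^\pm\in\elleom q$ for some $q>1$, which I would obtain from the fractional Sobolev embedding $W^{s_0,p}(\O)\hookrightarrow \elleom{p^*}$ with $p^*=dp/(d-s_0p)>p$ (for $s_0p<d$; otherwise any finite exponent works). This gives $\abs u^{p-k}\in\elleom{p^*/(p-k)}$ with $p^*/(p-k)>1$ for every $k\ge0$. The case $k=0$, with $f=u^p$, is the critical one, and it is exactly where the strict gain $p^*>p$ is needed, just as $u^2\in\elleom q$ with $q>1$ was needed for $p=2$.

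Once these hypotheses are verified, Lemma \ref{lemma:product} and Proposition \ref{prop:HanPinamonti}, through Lemma \ref{lemma:alpha-int}, do the remaining work, and summing over $k$ gives \eqref{Eq:intro:ExtMS:pw:eucl-p-even}.
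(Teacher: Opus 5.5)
Your proposal is correct and follows the paper's argument. You split off the interior term via Lemma \ref{lemma:Vanishes}, expand $(u(x)-u(y))^p$ binomially, and apply Lemma \ref{lemma:alpha-int} to each cross term, with $f\in\elleom q$, $q>1$, coming from the fractional Sobolev embedding. The paper's proof instead assumes $u\ge0$ ``for simplicity''. Your splitting of $u^{p-k}$ and $u^k$ into positive and negative parts, as the paper does for $p=2$, is what handles the odd-$k$ terms.
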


\begin{proof}[Proof of Corollary \ref{cor:p-even}]
Assume, for simplicity, that $u$ only takes non-negative values.
We start by expanding $s\semin{u}{W^{s,p}(Q_\Omega)}^p$ as
    \begin{equation}
    \begin{split}
        \frac{s}{2}\semin{u}{W^{s,p}(Q_\Omega)}^p=&\frac{s}{2}\semin{u}{W^{s,p}(\Omega)}^p
        +s\int_{\Omega}\int_{\Omega^c} \sum_{k=0}^p \binom{p}{k}(-1)^k \frac{(u(y))^k (u(x))^{p-k}}{|x-y|^{d+sp}}dydx. 
    \end{split}
    \end{equation}
First, note that Lemma \ref{lemma:Vanishes} yields
\[\frac{s}{2}[u]^p_{W^{s,p}(\O)}\to 0\]
as $s\to 0$. Then, by Lemma \ref{lemma:alpha-int},
\[
\lim_{s\to0}s\int_{\Omega}\int_{\Omega^c}\frac{(u(x))^k (u(y))^{p-k}}{|x-y|^{d+sp}}dydx=\alpha_p(u^k)\io u^{p-k}(x)dx,
\]
which concludes the proof.
\end{proof}

If $p\in\N$ is odd, we cannot recover a closed formula such as \eqref{Eq:intro:ExtMS:pw:eucl-p-even}. Indeed, for any $u\in\C^\infty_c(\Omega)$, the classical Maz'ya-Shaposhnikova formula states that
\[
\lim_{s\to0^+}\frac{s}{2}[u]_{W^{s,p}(Q_\O)}^p=\frac{d\omega_d}{p}\io \abs{u(x)}^pdx,
\]
while the right-hand side of \eqref{Eq:intro:ExtMS:pw:eucl-p-even} would just reduce to $\io u(x)^pdx$. The following example shows some technical difficulties which would arise if one tried to follow this approach.
\begin{example}
    \label{poddcounterexample}
    Choose any $p\in\N$. Take $d=1$ and $\O=(-1,1)$. Consider:
    \begin{equation*}
        u(x):=\\
        \begin{cases}
            x\qquad\mathrm{if}-1<x<1\\
            0\qquad\mathrm{otherwise},
        \end{cases}
    \end{equation*}
    and then set $v=u+1$.  Note that, for every $x,y\in\Rd$,
    \[
    u(x)-u(y)=v(x)-v(y).
    \]
    Since $u_{\mid\O^c}=0$, it follows that
    \[
    \semin{v}{W^{s,p}(\Qom)}^p=\semin{u}{W^{s,p}(\Qom)}^p=\semin{u}{W^{s,p}(\Rd)}^p.
    \]
    Thus, by the Maz'ya-Shaposhnikova formula \eqref{eq:intro:classicalMS}, 
    \[
    \lim_{s\to0^+} \frac{s}{2}[v]_{W^{s,p}(\Qom)}^p=
    \frac12\lim_{s\to0^+} s[u]_{W^{s,p}(\Rd)}^p=\frac12\frac{2}{p}\int_{-1}^1\abs{x}^pdx=\frac{2}{p(p+1)}.
    \]
    On the other hand, we have that:
    \begin{align*}
            \sum_{k=0}^p\binom{p}{k}(-1)^k\alpha_p(v^k)\int_\O v^{p-k}dx&=\alpha_p(1)\int_\O\sum_{k=0}^p\binom{p}{k}(-1)^k v^{p-k}dx\\
            &=\frac{2}{p}\int_\O(v-1)^pdx=\frac{2}{p}\int_{-1}^1x^p\,dx,
    \end{align*}
    which is equal to zero if $p$ is odd. 
\end{example}

On the one hand, the example above suggests that an explicit formula as \eqref{Eq:intro:ExtMS:pw:eucl-p-even} might be beyond reach for general $p\neq 2$. 
On the other hand, it is natural to conjecture that the result in Theorem \ref{Theo:MS:integrale doppio} might extend to the case $p\neq2$. The extent to which this result can be generalized and the underlying assumptions can be relaxed is investigated in \cite{DaFaPi2}.

\subsection{A critical case}

It was shown in \cite{Quartet} that, for measurable sets $E\subset\R^d$ such that
\begin{equation}\label{eq:halfmeasure}
    \L^d(E\cap\O)=\L^d(E^c\cap \O),
\end{equation}
the limit of $s\operatorname{Per}_s(E;\O)$ exists independently of the existence of $\alpha_1(E)$ and
\[
\lim_{s\to0^+}s\operatorname{Per}_s(E;\O)=d\o_d\L^d(E\cap\O).
\]
In this subsection, we prove that
a similar result holds for the $H^s(\Qom)$ seminorm. Although its formulation, having to encompass many more cases, is slightly more involved,  it reduces to \eqref{eq:halfmeasure} when restricted to characteristic functions of sets.

\begin{prop}\label{prop:casocritico}
    Let $s_0>0$ and let $u\in H^{s_0}(\Qom)$. For every $y\in\O^c$, define
    \[
    f(y):=\io\abs{u(x)-u(y)}^2\,dx.
    \]
    Then, if $\alpha(f)\in\R$, it holds
    \begin{equation}
        \lim_{s\to0^+}\frac{s}{2}\semin{u}{H^s(\Qom)}^2=\alpha(f).
    \end{equation}
\end{prop}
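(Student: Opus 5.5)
We split the energy as in the decomposition of the $H^s(\Qom)$ seminorm,
\[
\frac{s}{2}\semin{u}{H^s(\Qom)}^2=\frac{s}{2}\semin{u}{H^s(\O)}^2+s\io\int_{\O^c}\frac{\abs{u(x)-u(y)}^2}{\abs{x-y}^{d+2s}}\,dy\,dx .
\]
The first term tends to zero by Lemma \ref{lemma:Vanishes}, since $u\in H^{s_0}(\O)$. It therefore remains to show that
\[
s\int_{\O^c}\io\frac{\abs{u(x)-u(y)}^2}{\abs{x-y}^{d+2s}}\,dx\,dy\to\alpha(f).
\]
The idea is to replace $\abs{x-y}$ by $\abs{y}$ for $y$ far away, so that Tonelli's theorem produces $s\int f(y)\abs{y}^{-d-2s}\,dy$. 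We set $f:=0$ on $\O$; this changes nothing at infinity.

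Fix $R$ large, with $\O\subset B_{R/2}(0)$. On the bounded region $(B_R\setminus\O)\times\O$, the contribution vanishes as $s\to0^+$ by Remark \ref{rk:seminormalimitati}. Note that $u\in H^{s_0}(\Qom)$ gives finiteness with exponent $s_0$, and hence for every $s<s_0$ on bounded sets, arguing exactly as in Lemma \ref{lemma:Vanishes}. Likewise, $s\int_{B_R\setminus\O} f(y)\abs{y}^{-d-2s}\,dy\to0$, because $f\in L^1_{loc}$: indeed $f(y)\le 2\norma{u}{\elleom2}^2+2\L^d(\O)u(y)^2$ and $u\in L^2_{loc}$. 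So it suffices to compare the two expressions on $y\in B_R^c$.

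For $y\in B_R^c$ and $x\in\O$, the mean value estimate from Step 2 of Lemma \ref{lemma:lualphau} gives
\[
\Bigl|\abs{x-y}^{-d-2s}-\abs{y}^{-d-2s}\Bigr|\le C\,\frac{\mathrm{diam}(\O)+\sup_{\O}\abs{x}}{R}\,\abs{y}^{-d-2s},
\]
with $C$ independent of $s\in(0,1)$. Multiplying by $\abs{u(x)-u(y)}^2\ge0$ and integrating in $x\in\O$ and $y\in B_R^c$, the difference between the exact integral and $s\int_{B_R^c}f(y)\abs{y}^{-d-2s}\,dy$ is bounded by $\frac{C'}{R}\,s\int_{B_R^c}f(y)\abs{y}^{-d-2s}\,dy$. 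Since $f\ge0$ and $\alpha(f)\in\R$, Step 1 of Lemma \ref{lemma:lualphau} shows that this quantity converges to $\alpha(f)$ as $s\to0^+$, for every $R$. Taking $\limsup$ and $\liminf$ in $s$ and then letting $R\to\infty$ yields the claim.

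The only delicate point is integrability, that is, making sure that all the split integrals are finite for small $s$ so that the subtractions are legitimate. Because every integrand is nonnegative, we can work directly with upper and lower bounds, and finiteness follows from $\alpha(f)\in\R$. No assumption on $\alpha(u^\pm)$ or $\alpha(u^2)$ is needed, since we never expand the square.
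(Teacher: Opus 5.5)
Your proof is correct and follows essentially the same route as the paper: discard the bounded interactions via Lemma \ref{lemma:Vanishes} and Remark \ref{rk:seminormalimitati}, replace $\abs{x-y}$ by $\abs{y}$ on $B_R^c(0)$ using the mean-value bound from Lemma \ref{lemma:lualphau}, apply Tonelli to get $s\int_{B_R^c(0)} f(y)\abs{y}^{-d-2s}\,dy\to\alpha(f)$, and let $R\to\infty$. One side claim is wrong but harmless: $s\int_{B_R\setminus\O}f(y)\abs{y}^{-d-2s}\,dy\to0$ can fail when $0\notin\O$, because $f$ is bounded below by a positive constant unless $u$ is a.e.\ constant on $\O$, and $\abs{y}^{-d-2s}$ is not integrable at the origin; you never use this claim, and you can in any case translate so that $0\in\O$.
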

Note that condition $\alpha(f)\in\R$ holds trivially if $f$ is constant. This is the case, for example, if $u=\chi_E$ for a set $E$ satisfying \eqref{eq:halfmeasure}. Indeed,
\begin{equation}
    \begin{split}
        \io\abs{\chi_E(x)-\chi_E(y)}^2\,dx&=m(E\cap\O)-2m(E\cap\O)\chi_E(y)+m(\O)\chi_E(y)\\
        &=m(E\cap\O)\chi_{E^c}(y)+m(E^c\cap\O)\chi_E(y)\\
        &=m(E\cap\O).
    \end{split}
\end{equation}
This shows, in particular, that the critical case in \cite{Quartet} can be recovered as a corollary of Proposition \ref{prop:casocritico}.
% The proof of Proposition \eqref{prop:casocritico} requires the following Lemma, which can be proved by following the proof of Lemma \ref{lemma:lualphau}.
% \begin{lemma}\label{lemma:limsup_alpha}
%     Let $u\in\elle1_{\text{loc}}(\Rd)$ and assume that
%     \begin{equation}
%         \limsup_{s\to0^+} s\int_{B_1^c} \frac{u(y)}{\abs{y}^{d+2s}}dy<+\infty.
%     \end{equation}
%     Then, for every $R>0$ and $x\in \Rd$
%     \begin{equation}
%         \lim_{s\to0^+} \abs{s\int_{B_1^c(0)} \frac{u(y)}{\abs{y}^{d+2s}}dy-s\int_{B_R^c(x)}\frac{u(y)}{\abs{x-y}^{d+2s}}dy}=0.
%     \end{equation}
% \end{lemma}

\begin{proof}%[Proof of Proposition \ref{prop:casocritico}]

Let $R>0$ be such that $\O\sub B_R(0)$. Arguing as in the proof of \eqref{eq:estimateII} in Lemma \ref{lemma:lualphau}, we obtain that, for every $x\in \O$ and $y\in B_R^c(0)$,
\begin{align*}
        &\left|s\int_{B_R^c(0)}\frac{|u(x)-u(y)|^2}{|x-y|^{d+2s}}dy\,-s\int_{B_R^c(0)}\frac{|u(x)-u(y)|^2}{|y|^{d+2s}}dy\right|\\&\hspace{8mm}\leq\frac{(d+2s)\mathrm{diam}(\O)}{R}s\int_{B_R^c(0)}\frac{|u(x)-u(y)|^2}{|y|^{d+2s}}dy.
\end{align*}
Hence, integrating with respect to $x\in\O$:
\begin{align*}
    &\left|s\int_\O\int_{B_R^c(0)}\frac{|u(x)-u(y)|^2}{|x-y|^{d+2s}}dy\,dx\,-s\int_\O\int_{B_R^c(0)}\frac{|u(x)-u(y)|^2}{|y|^{d+2s}}dy\,dx\right|\\&\qquad\qquad\leq\frac{(d+2s)\mathrm{diam}(\O)}{R}s\int_\O\int_{B_R^c(0)}\frac{|u(x)-u(y)|^2}{|y|^{d+2s}}dy.\\
\end{align*}
By taking the limsup as $s\to0^+$ on both sides, recalling the assumption $\alpha(f)\in\R$, we obtain:
\begin{align*}
     &\limsup_{s\to0^+}\left|s\int_\O\int_{B_R^c(0)}\frac{|u(x)-u(y)|^2}{|x-y|^{d+2s}}dy\,dx\,-s\int_\O\int_{B_R^c(0)}\frac{|u(x)-u(y)|^2}{|y|^{d+2s}}dy\,dx\right|\\&\hspace{4cm}\leq\frac{d\,\mathrm{diam}(\O)}{R}\alpha(f),\\
\end{align*}
which goes to zero as $R\to\infty$. We can therefore compute:
    \begin{align*}
         \lim_{s\to0^+}\frac{s}{2}\semin{u}{H^s(\Qom)}^2&=\lim_{s\to0^+}s\int_{\O}\int_{\O^c} \frac{|u(x)-u(y)|^2}{|x-y|^{d+2s}}dxdy \\
         &=\lim_{R\to +\infty}\lim_{s\to0^+}s\int_{\O}\int_{B_R^c(0)} \frac{|u(x)-u(y)|^2}{|x-y|^{d+2s}}dxdy \\
         &=\lim_{R\to +\infty}\lim_{s\to0^+}s\int_{\O}\int_{B_R^c(0)} \frac{|u(x)-u(y)|^2}{|y|^{d+2s}}dxdy\\
         &=\lim_{R\to +\infty}\lim_{s\to0^+}s\int_{B_R^c(0)} \frac{f(y)}{|y|^{d+2s}}dxdy\\
         &=\alpha(f),
    \end{align*}
    where in the first equality we have used Remark \ref{rk:seminormalimitati}.
\end{proof}

\section{Gamma convergence}\label{Sec:Gammaconv}
In this section we show that the limiting formula identified in Theorem \ref{Theo:MS pointwise:euclidean} also captures the asymptotic behavior of the fractional Gagliardo seminorms in the sense of $\Gamma$-convergence. In particular, we prove that, provided that suitable solid boundary data are fixed outside the bounded domain $\O$, $\Gamma$-convergence with respect to the weak-$L^2(\O)$ topology can be proved with little effort. In what follows, without loss of generality, we will always assume that $\Omega$ contains the origin.

For this purpose, let us start by defining the space in which we will set our analysis. 
Let
\begin{equation}
    \mathcal{A}(\O):=\left\{v\in L^\infty (\O^c):\exists\;\alpha(v^+),\, \alpha(v^-),\,\alpha(v^2)\right\},
\end{equation}
be the set of all admissible boundary data. For any $v\in\mathcal{A}(\O)$, we consider the space 
\begin{equation}
    L^2_{loc}(Q_\O,v)=\left\{u\in L^2_{loc}(\R^n):u=v\textit{ on }\O^c\right\}.
\end{equation}
Finally, for every $s\in(0,1)$ and every $u\in L^2_{loc}(\R^n)$ we denote:
\begin{equation*}
    F_s(u,\O):=\begin{cases}\frac{s}{2}[u]_{H^s(Q_\O)}^2&\text{if }u\in L^2_{loc}(Q_\O,v),\\
    +\infty&\text{otherwise}.
    \end{cases}
\end{equation*}
First, we prove equi-coercivity for the functionals $F_s(\cdot,\O)$ with respect to the weak-$L^2(\O)$ topology. This result is a consequence of the fractional Hardy inequality proved in \cite[Formula (9)]{MazyaShaposhnikova2002}, which we recall here in the case $p=2$ (see also \cite[Theorem 1.1]{CriDLKNP23}). For every $s\in(0,1)$, let $H^s_0(\R^d)$ be the completion of $C^\infty_c(\R^d)$ with respect to $[\cdot]_{H^s(\R^d)}$ (note that the Gagliardo seminorm is, in fact, a norm in such space). By classical approximation results (see \cite[Theorem 6.78]{LeoniSobolevSpaces}), we have that, if $s\leq 1$, $H^s_0(\R^d)=H^s(\R^d)$. The fractional Hardy inequality in this setting reads as follows.
\begin{prop}[Formula (9) in \cite{MazyaShaposhnikova2002}]
   \label{Theo:HardyMS02}
   Let $\delta\in(0,1)$ and take $s\in(0,\frac{\delta^2}{8})$. Then, for every $u\in H^s_0(\R^d)$,
   \begin{equation}
        \label{Eq:HardyMS}
        \frac{d\omega_d(1-\delta)^2}{2^{2s+1}}\int_{\R^d}\frac{|u(x)|^2}{|x|^{2s}}dx\leq \frac{s}{2}[u]_{H^s(\R^d)}^2.
   \end{equation}
\end{prop}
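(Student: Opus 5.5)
The statement is the fractional Hardy inequality of Maz'ya--Shaposhnikova, Proposition \ref{Theo:HardyMS02}. I would prove it directly for $u\in C^\infty_c(\R^d)$ and then pass to $H^s_0(\R^d)$ by density. Density is harmless: the right-hand side is continuous in the $H^s$ seminorm, and the left-hand side is lower semicontinuous by Fatou's lemma along an a.e.\ convergent subsequence.

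The idea is to compare $u(x)$ with its averages on large spheres or annuli, in the spirit of the proof of the classical formula \eqref{eq:intro:classicalMS}. Fix $x\neq 0$ and use only the interactions with points $y$ lying far away relative to $|x|$, say $|y|\geq \lambda|x|$ with $\lambda=\lambda(\delta)>1$ to be chosen. By Young's inequality, for $\eta>0$,
\[
|u(x)-u(y)|^2\geq (1-\eta)|u(x)|^2-\Big(\frac1\eta-1\Big)|u(y)|^2 .
\]
Integrating against $|x-y|^{-d-2s}$ over $\{|y|>\lambda|x|\}$, and using $|x-y|\leq (1+1/\lambda)|y|$ there, the first term is bounded below by
\[
(1-\eta)|u(x)|^2\,\frac{d\omega_d}{2s}\,\frac{1}{(1+1/\lambda)^{d+2s}(\lambda|x|)^{2s}} .
\]
This produces the Hardy weight $|x|^{-2s}$ together with the constant $\frac{d\omega_d}{2s}$, which is the source of the leading factor $d\omega_d$ once we multiply by $s/2$.

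The negative term $\int_{\{|y|>\lambda|x|\}}|u(y)|^2|x-y|^{-d-2s}\,dy$ must be reabsorbed. After integrating in $x$ and swapping the order via Fubini, it becomes $\int |u(y)|^2 K(y)\,dy$ with
\[
K(y)=\int_{\{|x|<|y|/\lambda\}}|x-y|^{-d-2s}\,dx\lesssim \lambda^{-d}\,|y|^{-2s}.
\]
This is $O(\lambda^{-d})$ times the Hardy weight and carries no $1/s$ factor. Against the main term of size $\frac{1}{s}$, it is therefore negligible once $s$ is small compared to $\lambda^{-d}$ and $\eta$. One also has to use symmetry: the pairs $(x,y)$ and $(y,x)$ are both counted in $[u]^2_{H^s(\R^d)}$, so the region $\{|y|>\lambda|x|\}$ accounts for only one of the two orderings, and the factor $2^{-2s-1}$ should absorb this together with the $(1+1/\lambda)$ losses.

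The main obstacle is the bookkeeping of constants, namely obtaining exactly $(1-\delta)^2 2^{-2s-1}$ under the condition $s<\delta^2/8$. For this I would take $\lambda=1$ with the cruder bound $|x-y|\le 2|y|$, which yields the factor $2^{-2s}$, and optimize $\eta\sim\delta$. The factor $(1-\delta)^2$ should come from Young's inequality combined with the error term, which is of relative order $s/\delta$ after optimization. If the direct Young argument loses too much, I would fall back on the original Maz'ya--Shaposhnikova route. That route rewrites $u(x)$ as $u(x)-\bar u_{|x|}+\bar u_{|x|}$ using spherical means, applies the one-dimensional Hardy inequality for the radial part and the fractional Poincaré inequality on spheres for the oscillation, and uses the Cauchy--Schwarz step $|a+b|^2\ge(1-\delta)|a|^2-\delta^{-1}|b|^2$, which is what produces the threshold $s<\delta^2/8$.
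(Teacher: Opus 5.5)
Your overall scheme is the right one: keep only pairs with $y$ far from $x$, split with $|a-b|^2\ge(1-\eta)|a|^2-(\eta^{-1}-1)|b|^2$, reabsorb the negative part after Fubini, and conclude by density. The density step is fine. The gap is in the one estimate that decides the constant.

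Your pointwise bound $|x-y|\le(1+1/\lambda)|y|$ on $\{|y|>\lambda|x|\}$ costs the dimension-dependent factor $(1+1/\lambda)^{-d-2s}$. Making this close to $1$ forces $\lambda\gtrsim d$, which in turn costs $\lambda^{-2s}$. In fact
\[
\sup_{\lambda>1}\lambda^{-2s}\Big(1+\frac1\lambda\Big)^{-d-2s}=\Big(1+\frac{2s}{d}\Big)^{-d}\Big(\frac{2s}{d+2s}\Big)^{2s}\le\Big(\frac{2s}{d}\Big)^{2s},
\]
which tends to $0$ as $d\to\infty$ for fixed $s\in(0,\delta^2/8)$. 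The claimed factor $(1-\delta)^2 2^{-2s}$, by contrast, does not depend on $d$.

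The remedy you propose, $\lambda=1$, fails twice. First, $|x-y|\le 2|y|$ gives $|x-y|^{-d-2s}\ge 2^{-d-2s}|y|^{-d-2s}$, i.e.\ the factor $2^{-d-2s}$, not $2^{-2s}$. Second, for $\lambda=1$ your kernel $K(y)=\int_{\{|x|<|y|\}}|x-y|^{-d-2s}\,dx$ is $+\infty$: $y$ lies on the boundary of $B_{|y|}(0)$ and $|x-y|^{-d-2s}$ is not integrable near $x=y$. The main term is infinite as well, so the splitting becomes $\infty-\infty$.

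Your fallback through spherical means, the one-dimensional Hardy inequality and Poincar\'e on spheres gives a Hardy inequality with an unspecified constant $c(d,p)$. Nothing in that outline produces the leading constant $d\omega_d 2^{-2s-1}$ or the threshold $\delta^2/8$.

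The loss comes from the pointwise bound, not from the region. Take $\lambda=2$ and $\eta=\delta$, and estimate the main term by the bathtub principle. Since $|z|^{-d-2s}$ is radially decreasing and $B_R(0)$, $B_R(-x)$ have the same measure,
\[
\int_{\{|y|>R\}}\frac{dy}{|x-y|^{d+2s}}\ge\int_{\{|y|>R\}}\frac{dy}{|y|^{d+2s}}=\frac{d\omega_d}{2s}R^{-2s}\qquad\text{for every }x\in\R^d,\ R>0.
\]
The sets $\{|y|>2|x|\}$ and $\{|x|>2|y|\}$ are disjoint, so $\frac s2[u]^2_{H^s(\R^d)}\ge s\iint_{\{|y|>2|x|\}}\frac{|u(x)-u(y)|^2}{|x-y|^{d+2s}}\,dy\,dx$. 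Applying the display with $R=2|x|$, the main term is at least $\frac{d\omega_d(1-\delta)}{2^{2s+1}}\int_{\R^d}\frac{|u|^2}{|x|^{2s}}\,dx$.

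On $\{|x|<|y|/2\}$ one has $|x-y|>|y|/2$, so $K(y)\le\omega_d(|y|/2)^{d}(|y|/2)^{-d-2s}=\omega_d 4^{s}|y|^{-2s}$, again with no dimensional loss. The error term $s(\delta^{-1}-1)\omega_d4^s\int_{\R^d}\frac{|u|^2}{|y|^{2s}}\,dy$ is then absorbed by $\frac{d\omega_d\delta(1-\delta)}{2^{2s+1}}\int_{\R^d}\frac{|u|^2}{|x|^{2s}}\,dx$ as soon as $2s\,16^{s}\le d\delta^2$. This holds for $s<\delta^2/8$, since then $2s\,16^s<\frac{\sqrt2}{4}\delta^2$.

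Note also that the $\frac12$ in $2^{-2s-1}$ comes from $\int_R^\infty r^{-1-2s}\,dr=R^{-2s}/(2s)$. The factor $2$ from symmetry is used up exactly by the $\frac s2$, so it does not absorb the ordering issue as you suggested.
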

\begin{remark}
    Let $\O\subseteq\R^d$ be a bounded domain with Lipschitz boundary. For any $u\in H^s(\O)$ let:
     \begin{equation*}
        \tilde{u}(x):=
        \begin{cases}
            u(x)& \text{if } x\in\O\\
            0 & \text{if } x\in\O^c.
        \end{cases}
    \end{equation*}
    For $s<\frac12$, we can apply \ref{Eq:HardyMS} to $\tilde{u}$ to obtain:
    \begin{equation}
        \label{Eq:HardyMSOmega}
         \frac{d\omega_d(1-\delta)^2}{2^{2s+1}}\int_{\O}\frac{|u(x)|^2}{|x|^{2s}}dx\leq \frac{s}{2}[\tilde u]_{H^s(\R^d)}^2.
    \end{equation}
\end{remark}
Our compactness result reads then as follows.
\begin{prop}
\label{prop:compactness}
    Let $v\in\mathcal{A}(\O)$ and let $(u_s)_s\subseteq L^2_{loc}(Q_\O,v)$, $s\in(0,1)$, be such that
    \begin{equation*}
        \sup_{s\in(0,1)}F_s(u)<\infty.
    \end{equation*}
    Then, there exists a function $u\in L^2(\O)$ such that, up to subsequences, $u_s\rightharpoonup u$ weakly in $L^2(\O)$.
\end{prop}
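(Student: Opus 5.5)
The plan is to reduce the statement to a uniform bound on $\norma{u_s}{\elleom 2}$, since bounded sets in $\elleom 2$ are weakly sequentially compact. I will obtain that bound from the fractional Hardy inequality \eqref{Eq:HardyMSOmega}, applied to the truncation $\tilde u_s=u_s\chi_\O$, after comparing $[\tilde u_s]_{H^s(\R^d)}$ with $[u_s]_{H^s(\Qom)}$. Throughout I restrict to $s<\min\{\delta^2/8,1/2\}$ for a fixed $\delta\in(0,1)$, say $\delta=1/2$. Only the tail $s\to0^+$ matters for extracting the subsequence; for $s$ bounded away from $0$ the energy bound together with Lemma \ref{lemma:product}-type estimates controls $\norma{u_s}{\elleom2}$ directly.

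\emph{Comparison of seminorms.} Since $\tilde u_s=0$ on $\O^c$, we have
\[
\frac s2[\tilde u_s]_{H^s(\R^d)}^2=\frac s2[u_s]_{H^s(\O)}^2+s\io\int_{\O^c}\frac{|u_s(x)|^2}{|x-y|^{d+2s}}dydx .
\]
On the other hand, $u_s=v$ on $\O^c$ gives
\[
F_s(u_s)=\frac s2[u_s]_{H^s(\O)}^2+s\io\int_{\O^c}\frac{|u_s(x)-v(y)|^2}{|x-y|^{d+2s}}dydx .
\]
Using $|a|^2\le 2|a-b|^2+2|b|^2$ with $a=u_s(x)$ and $b=v(y)$, I get
\[
\frac s2[\tilde u_s]^2_{H^s(\R^d)}\le 2F_s(u_s)+2s\io\int_{\O^c}\frac{|v(y)|^2}{|x-y|^{d+2s}}dydx .
\]
By Lemma \ref{lemma:alpha-int} with $f=1$ and $g=v^2$, which is admissible because $v\in\mathcal A(\O)$, the last term converges to $2\alpha(v^2)\L^d(\O)$. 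It is therefore bounded for all small $s$.

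\emph{Hardy step.} Because $\O$ is bounded, $|x|^{2s}\le \max\{1,\mathrm{diam}(\O)\}^{2s}\le C$ for $x\in\O$. Inequality \eqref{Eq:HardyMSOmega} then yields
\[
\frac{d\omega_d(1-\delta)^2}{2^{2s+1}C}\norma{u_s}{\elleom2}^2\le 2\sup_s F_s(u_s)+C'.
\]
The constant on the left stays bounded away from zero as $s\to0^+$, so $\norma{u_s}{\elleom2}$ is bounded uniformly in $s$. A weakly convergent subsequence then exists by reflexivity of $\elleom2$.

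\emph{Main obstacle.} The delicate point is applying Proposition \ref{Theo:HardyMS02} to $\tilde u_s$: this requires $\tilde u_s\in H^s_0(\R^d)$, equivalently $[\tilde u_s]_{H^s(\R^d)}<\infty$, together with $\tilde u_s\in\elle2$. The comparison above gives finiteness of the seminorm as long as $F_s(u_s)<\infty$. I also need $u_s\in\elleom2$, which follows from $u_s\in\elle2_{loc}$. Membership in $H^s_0$ then follows from $H^s_0=H^s$ for $s\le1$, as recalled before the proposition. I would also state explicitly that the admissible $s$-range of the Hardy inequality ($s<\delta^2/8$) is harmless, because compactness only concerns $s\to0^+$.
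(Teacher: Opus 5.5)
Your proposal is correct and follows essentially the same route as the paper. Both arguments use the zero extension $\tilde u_s$ and the splitting $|u_s(x)|^2\le 2|u_s(x)-v(y)|^2+2|v(y)|^2$, which gives $\frac s2[\tilde u_s]^2_{H^s(\R^d)}\le 2F_s(u_s)+2s\io\int_{\O^c}|v(y)|^2|x-y|^{-d-2s}\,dy\,dx$. The last term is controlled through $\alpha(v^2)$, and the Hardy inequality is applied with $\delta=1/2$ for $s<1/32$.

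Two small remarks. Your aside about $s$ bounded away from $0$ is unnecessary: the paper also only bounds $\norma{u_s}{\elleom2}$ for $s<\tilde s$. The bound $|x|\le\mathrm{diam}(\O)$ on $\O$ relies on the paper's standing assumption $0\in\O$; without it, replace $\mathrm{diam}(\O)$ by $\sup_{x\in\O}|x|$.
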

\begin{proof}
    For any $s\in(0,1)$, let $\tilde{u}_s$ denote the zero extension of $u_s$ outside $\O$. We estimate:
    \begin{align*}
        \frac{s}{2}[\tilde{u}_s]_{H^s(\R^d)}^2&=\frac{s}{2}\int_\O\int_\O\frac{|u_s(x)-u_s(y)|^2}{|x-y|^{d+2s}}dydx+s\int_\O\int_{\O^c}\frac{|u_s(x)|^2}{|x-y|^{d+2s}}dydx\\
        &\leq s[u_s]_{H^s(Q_\O)}^2+2s\int_\O\int_{\O^c}\frac{|v(y)|^2}{|x-y|^{d+2s}}dydx.
    \end{align*}
    By the definition of $\mathcal{A}(\O)$, the same argument as in \eqref{Eq:MSExtEuProof:IV} implies that there exists $\tilde{s}\in (0,1)$ such that, for every $s<\tilde{s}$
    \begin{equation*}
        s\int_\O\int_{\O^c}\frac{|v(y)|^2}{|x-y|^{d+2s}}dydx\leq \alpha(v^2)\L^d(\O)+1:=c(\O,v),
    \end{equation*}
    and, therefore,
    \begin{equation}
        \label{Eq:GammProofEst1}
        \sup_{s<\tilde{s}}\frac{s}{2}[\tilde{u}_s]_{H^s(\R^d)}^2\leq \sup_{s<\tilde{s}} s [u_s]_{H^s(Q_\O)}^2 + c(\O,v)
        <\infty .   
    \end{equation}
    By choosing $\tilde{s}\in(0,1/32)$, and applying \eqref{Eq:HardyMSOmega} with $\delta=1/2$, we get, for $s<\tilde s$, 
    \begin{equation*}
         \frac{d\omega_d}{2^{2s+3}\mathrm{diam}(\O)^{2s}}\int_{\O}|u_s(x)|^2dx \leq  \frac{s}{2}[\tilde{u}_s]_{H^s(\R^d)}^2 ,
    \end{equation*}
    which, together with \eqref{Eq:GammProofEst1}, gives
    \begin{equation}
         \sup_{s<\tilde{s}}\|u_s\|^2_{L^2(\O)} <\infty,
    \end{equation}
    thus ending the proof.
\end{proof}
We now state and prove the main result of this section.
\begin{theo}
    \label{Theo:MSGamma}
    Let $\O\subseteq \R^d$ be a bounded domain with Lipschitz boundary and let $v\in\mathcal{A}(\O)$. Then, as $s\to 0^+$, the family of functionals $F_s(u,\O)$ Gamma-converges with respect to the weak-$L^2(\O)$ topology to the functional:
    \begin{equation}
        F_0(u,\O):=
        \begin{cases}
            \sum_{k=0}^2 (-1)^k\binom{2}{k}\alpha(v^k)\io u^{2-k}(x)dx &\text{if }u\in L^2(\O)\\
            +\infty &\text{otherwise.}
        \end{cases}
    \end{equation}
\end{theo}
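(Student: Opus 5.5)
The plan is to prove the two $\Gamma$-convergence inequalities directly. Equi-coercivity is already available from \Cref{prop:compactness}, which ensures that the weak-$L^2(\O)$ topology is the right one to work in. Throughout, fix $v\in\mathcal{A}(\O)$ and identify $u\in L^2(\O)$ with its extension by $v$ on $\O^c$. For $u\in L^2_{loc}(Q_\O,v)$ I would use the decomposition from the proof of \Cref{Theo:MS pointwise:euclidean}:
\begin{equation*}
F_s(u,\O)=\frac{s}{2}[u]^2_{H^s(\O)}+s\io u^2(x)\,a_s(x)\,dx-2s\io u(x)\,b_s(x)\,dx+s\io c_s(x)\,dx,
\end{equation*}
where $a_s(x)=\int_{\O^c}|x-y|^{-d-2s}dy$, $b_s(x)=\int_{\O^c}v(y)|x-y|^{-d-2s}dy$ and $c_s(x)=\int_{\O^c}v^2(y)|x-y|^{-d-2s}dy$. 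The limit functional $F_0(u,\O)=\alpha(1)\|u\|^2_{L^2(\O)}-2\alpha(v)\io u+\alpha(v^2)\L^d(\O)$ is a continuous convex quadratic on $L^2(\O)$, hence weakly lower semicontinuous. This is the structural fact that should make the argument short.

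\textbf{Liminf inequality.} Let $u_s\rightharpoonup u$ weakly in $L^2(\O)$ with $\liminf F_s(u_s)<\infty$. I would drop the nonnegative interior term $\frac s2[u_s]^2_{H^s(\O)}$ and write the remaining three terms as
\begin{equation*}
\io\big(s\,a_s u_s^2-2s\,b_s u_s+s\,c_s\big)dx .
\end{equation*}
Since $v\in L^\infty$, pointwise in $x\in\O$ we have $s\,a_s(x)\to\alpha(1)$, $s\,b_s(x)\to\alpha(v)$ and $s\,c_s(x)\to\alpha(v^2)$ by \Cref{lemma:lualphau} (with Step 1 there handling the near region $\O^c\cap B_R(x)$ for $x$ at positive distance from $\partial\O$). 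The difficulty is that $s\,a_s$ blows up near $\partial\O$ like $\mathrm{dist}(x,\partial\O)^{-2s}$ and only converges in $L^1$ and in measure. I would therefore bound $s\,a_s\ge s\int_{B_{R}^c(x)}|x-y|^{-d-2s}dy=\frac{d\omega_d}{2}R^{-2s}$ for $R=\mathrm{diam}(\O)$, a uniform lower bound tending to $\alpha(1)$. This gives
\begin{equation*}
\liminf_{s\to0^+}\io s\,a_s u_s^2\ge\alpha(1)\liminf_{s\to0^+}\|u_s\|^2_{L^2(\O)}\ge\alpha(1)\|u\|^2_{L^2(\O)}
\end{equation*}
by weak lower semicontinuity of the norm. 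For the linear term, I need $s\,b_s\to\alpha(v)$ strongly in $L^2(\O)$, which pairs correctly with $u_s\rightharpoonup u$. Since $|s\,b_s|\le\|v\|_\infty s\,a_s$, this reduces to $s\,a_s$ being bounded in $L^2(\O)$ and converging pointwise. The bound $\io \mathrm{dist}(x,\partial\O)^{-4s}dx<\infty$ for small $s$ follows from the coarea bound of \cite{Kraft2016} used in \Cref{lemma:product}, and Vitali then upgrades pointwise convergence to $L^2$ convergence, using $L^{2+\epsilon}$ bounds from the same estimate. The constant term converges by \Cref{lemma:alpha-int} with $f=1$, $g=v^2$.

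\textbf{Limsup inequality.} For $u\in L^2(\O)$, I would first take $u\in C^\infty_c(\O)$ extended by $v$. Then $u\in H^{s_0}(Q_\O)$ for small $s_0$: the interior part is finite, and the cross term is controlled by Lemma \ref{lemma:product}-type estimates since $u$ and $v$ are bounded. \Cref{Theo:MS pointwise:euclidean} applies because $\alpha(v^\pm)$ and $\alpha(v^2)$ exist, and the constant sequence is a recovery sequence with $F_s(u)\to F_0(u)$. For general $u\in L^2(\O)$, I would approximate by $u_k\in C^\infty_c(\O)$ strongly in $L^2(\O)$. Continuity of $F_0$ in strong $L^2$ together with lower semicontinuity of the $\Gamma$-limsup then gives the claim by the standard diagonal argument. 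Finally, $u\notin L^2(\O)$ is impossible for weak limits, so that case is trivial.

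\textbf{Main obstacle.} The main obstacle is the liminf for the linear cross term near $\partial\O$, that is, the strong $L^2(\O)$ convergence of $s\,b_s$ to $\alpha(v)$ despite the boundary singularity. I would attack it first via the distance-function integrability estimate already used in \Cref{lemma:product}. If that fails, the fallback is to absorb the boundary layer into the quadratic term using a Young inequality: for $\O_\eta=\{x\in\O:\mathrm{dist}(x,\partial\O)<\eta\}$, the quadratic term is nonnegative, $2s\,b_s u_s\le s\,a_s(\epsilon u_s^2+\epsilon^{-1}\|v\|^2_\infty)$, and $s\io_{\O_\eta}a_s\to\alpha(1)\L^d(\O_\eta)$, which is small as $\eta\to0$.
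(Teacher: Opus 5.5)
Your proof is correct, but it takes a different route from the paper's in two places.

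\textbf{Quadratic term in the liminf.} The paper groups $I+II=\frac s2[\tilde u_s]^2_{H^s(\R^d)}$, where $\tilde u_s$ is the zero extension. It then invokes the fractional Hardy inequality of Proposition \ref{Theo:HardyMS02}, which gives the lower bound $\frac{d\omega_d(1-\delta)^2}{2^{2s+1}\mathrm{diam}(\O)^{2s}}\|u_s\|^2_{L^2(\O)}$, and finally lets $\delta\to0$. You instead discard the interior term and use the pointwise bound $s\int_{\O^c}|x-y|^{-d-2s}dy\ge\frac{d\omega_d}{2}\mathrm{diam}(\O)^{-2s}$ for $x\in\O$. This bound holds because, $\O$ being open and bounded, $\{|y-x|\ge\mathrm{diam}(\O)\}\subseteq\O^c$ for every $x\in\O$. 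Your estimate is elementary and slightly sharper, with no $(1-\delta)^2$ or $2^{-2s}$ loss. It shows that, since $\O$ is bounded, the Hardy inequality is not needed here. The same bound, combined with the $L^2$ control of the cross term, would also give the equi-coercivity of Proposition \ref{prop:compactness} without Hardy.

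\textbf{Cross term.} Your handling is essentially the paper's. Pointwise convergence comes from Lemma \ref{lemma:lualphau}, and domination comes from $\mathrm{dist}(x,\partial\O)^{-2s}$, made $L^2$- (or $L^{2+\epsilon}$-) integrable by the level-set bound of \cite{Kraft2016}. Your use of Vitali is interchangeable with the paper's dominated convergence with the explicit majorant $G$.

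\textbf{Limsup.} The paper only writes ``by Theorem \ref{Theo:MS pointwise:euclidean}'', i.e.\ it uses the constant recovery sequence. That covers only those $u$ whose extension by $v$ lies in $H^{s_0}(Q_\O)$ for some $s_0$. For a general $u\in L^2(\O)$, for instance one lying in no $H^{s}(\O)$ with $s>0$, one has $F_s(u,\O)=+\infty$ for every $s$. Your step through $C^\infty_c(\O)$ is therefore genuinely needed. It uses strong-$L^2$ continuity of $F_0$ together with a diagonal argument, or equivalently the lower semicontinuity of the $\Gamma$-$\limsup$ on bounded sets, where the weak topology is metrizable. In this respect your argument is more complete than the written proof.
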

\begin{proof}
    By Theorem \ref{Theo:MS pointwise:euclidean}, we only need to prove that whenever $(u_s)_s\subseteq L^2_{loc}(Q_\O,v)$ converges weakly-$L^2(\O)$ to some $u\in L^2(\O)$, then
    \begin{equation}
        \liminf_{s\to0^+}F_s(u_s,\O)\geq \sum_{k=0}^2 (-1)^k\binom{2}{k}\alpha(v^k)\io u^{2-k}(x)dx.
    \end{equation}
    Let us start by decomposing the energy in the same way as in the proof of Theorem \ref{Theo:MS pointwise:euclidean}:
    \begin{align*}
        F_s(u_s,\O)=&\frac{s}{2}\semin{u_s}{H^s(\Omega)}^2
        +s\io\int_{{\Omega^c}} \frac{\abs{u_s(x)}^2}{\abs{x-y}^{d+2s}}dydx\\
        &-2s\io u_s(x)\int_{{\Omega^c}} \frac{v(y)}{\abs{x-y}^{d+2s}}dydx
        +s\io\int_{{\Omega^c}} \frac{\abs{v(y)}^2}{\abs{x-y}^{d+2s}}dydx\\
        &:=I+II+III+IV.     
    \end{align*}
    The same proof as for \eqref{Eq:MSExtEuProof:IV} yields that $IV$ converges to $\alpha(v^2) \L^d(\O)$ as $s\to0^+$. Now, notice that:
    \begin{equation*}
        I+II=\frac{s}{2}[\tilde{u}_s]_{H^s(\R^d)}^2.
    \end{equation*}
    We can therefore exploit the argument used in the proof of  \cite[Theorem 1.2]{CriDLKNP23}. Indeed by Proposition \ref{Theo:HardyMS02} we have
    \begin{align*}
        \frac{s}{2}[\tilde u_s]_{H^s(\R^d)}^2&\geq \frac{d\omega_d(1-\delta)^2}{2^{2s+1}}\int_{\O}\frac{|u_s(x)|^2}{|x|^{2s}}dx\\
        &\geq \frac{d\omega_d(1-\delta)^2}{2^{2s+1}\mathrm{diam}(\O)^{2s}}\int_{\O}|u_s(x)|^2dx,
    \end{align*}
    which yields:
    \begin{equation*}
        \liminf_{s\to0^+}\frac{s}{2}[\tilde u]_{H^s(\R^d)}^2\geq \frac{d \omega_d}{2}(1-\delta)^2\int_{\O}|u(x)|^2dx.
    \end{equation*}
    Letting $\delta\to 0^+$ we finally obtain:
    \begin{equation}
         \liminf_{s\to0^+}\frac{s}{2}[\tilde u]_{H^s(\R^d)}^2\geq \frac{d \omega_d}{2}\int_{\O}|u(x)|^2dx.
    \end{equation}
    To deal with $III$, we start noticing that from \Cref{lemma:lualphau} it follows that:
    \begin{equation}
        \label{Eq:Gammaproof:alphavpw}
        \lim_{s\to0^+}s\int_{\O^c}\frac{v(y)}{|x-y|^{d+2s}}dy=\alpha(v).
    \end{equation}
    
    For $x\in\O$, we set
    \begin{equation*}
        g_s(x):=s\int_{\O^c}\frac{v(y)}{|x-y|^{d+2s}}dy,
    \end{equation*}
    and use the notation $\delta_x:=d(x,\partial \Omega)$. Let $\tilde{v}$ denote the extension by zero of $v$ to $\O$. For $s<1/8$, we estimate
    \begin{align*}
         g_s(x)= &\, s\int_{B_{\delta(x)}^c(x)}\frac{\tilde{v}(y)}{|x-y|^{d+2s}}dydx\leq \|v\|_{L^\infty(\Omega^c)} \,d\omega_d\,s\int_{\delta(x)}^\infty\frac{1}{r^{1+2s}}dr\\
        &=\|v\|_{L^\infty(\Omega^c)}\frac{d\omega_d}{2}\frac{1}{\delta(x)^{2s}}\\
        &\leq \|v\|_{L^\infty(\Omega^c)}\frac{d\omega_d}{2}\left(\chi_{\{\delta\geq 1\}}(x)+\frac{\chi_{\{\delta<1\}}(x)}{\delta(x)^{1/4}}\right):=G(x).
    \end{align*}
    We have that:
    \begin{align*}
        \int_\O G(x)^2dx&=\left(\|v\|_{L^\infty(\Omega^c)}\frac{d\omega_d}{2}\right)^2\left(\L^d(\O)+\int_0^{\mathrm{diam}(\O)}\frac{1}{r^{1/2}}\mathcal{H}^{d-1}(\{\delta(x)=r\})dr\right)\\
        &\leq C(\O)\left(\|v\|_{L^\infty(\Omega^c)}\frac{d\omega_d}{2}\right)^2\int_0^{\mathrm{diam}(\O)}\frac{1}{r^{1/2}}dr<\infty,
    \end{align*}
    for a suitable constant $C(\O)$ only dependent on the set $\O$.
    Thus, by the Dominated Convergence Theorem we obtain
    \begin{equation*}
        \lim_{s\to0^+}\|g_s-\alpha(v)\|_{L^2(\O)}=0,
    \end{equation*}
    which together with the weak-$L^2(\O)$ convergence of $(u_s)_s$ gives:
    \begin{equation}
        \lim_{s\to0^+}s\int_\O u_s(x)\int_{\O^c}\frac{v(y)}{|x-y|^{d+2s}}dydx=\alpha(v)\int_\O u(x)dx.
    \end{equation}
    Then, the thesis follows by combining the convergence of all terms I,II,III, and IV.
\end{proof}
\begin{remark}
    We point out that the same result holds under slightly weaker assumptions on the boundary data. In fact, a careful inspection of the proof, as well as the continuity of the mass at infinity with respect to the strong $L^\infty$-convergence of its argument, shows that an analogous Gamma-convergence analysis can be established for the sequence of functionals
    \begin{equation*}
    \widetilde{F_s}(u,\O):=\begin{cases}\frac{s}{2}[u]_{H^s(Q_\O)}^2&\text{if }u\in L^2_{loc}(Q_\O,v_s),\\
    +\infty&\text{otherwise},
    \end{cases}
\end{equation*}
provided that there exists $v\in L^{\infty}(\O^c)$ such that the data satisfy $v_s\to v$  strongly in $L^{\infty}(\O^c)$ as $s\to 0^+$.\end{remark}

\begin{remark}
    The fact that the topology for the $\Gamma$-limit is the weak $\elle2$ topology (which appears to be the optimal one even in the case of vanishing boundary data \cite{CriDLKNP23}) highlights that an extension in the sense of $\Gamma$-convergence of \eqref{eq:intro:quartet} (see \cite[Theorem 2.5]{Quartet}) would not lead to a functional defined on sets, but one defined on measurable functions attaining values in $[0,1]$. 
    In this sense, \Cref{Theo:MSGamma} provides the lower semicontinuous envelope (with respect to the weak-$\elleom2$ topology) of the functional
    \[
    \mu(u)=\begin{cases}
        (d\omega_d-\alpha_1(E))\mathcal{L}^d(\O\cap E)+\alpha_1(E)\mathcal{L}^d(\O\setminus E )& \text{if }u=\chi_E\text{ and }\exists \alpha_1(E)\\
        +\infty&\text{ otherwise}.
    \end{cases}
    \]
    Indeed, take a measurable set $F\subseteq \R^d$ such that $\chi_F\in\mathcal{A}(\O)$, \textit{i.e.} $\alpha_1(F)$ exists. 
    Then the lower semicontinuous envelope of $\mu$ with respect to the weak-$L^2(\O)$ topology on the space $L^2_{loc}(Q_\O,\chi_F)$ is given by: 
    \[
    \mu^{**}(u)=\begin{cases}
        2F_0(u,\O)\qquad&\text{if }u:\Omega\to[0,1]\\
        +\infty\qquad&\text{otherwise}.
    \end{cases}
    \]
    where the multiplication by $2$ comes from the identity $2\alpha_2(u)=\alpha_1(u)$ for every $u$ with finite mass at infinity.
\end{remark}

\section{The metric setting}\label{Sec:puntuale mms}
In this section we extend \Cref{Theo:MS pointwise:euclidean} to the metric measure framework. A metric measure space is a triple $(M,d,m)$ such that $(M,d)$ is a Polish space and $m$ is a positive Borel measure which is finite on bounded sets.
The validity of a Maz’ya–Shaposhnikova formula in the metric measure setting was first established in \cite{Han-MS&BBM}.

In this context, the analogue of the Gagliardo seminorm is defined by means of a measurable function $\rho:M\times M\to [0,+\infty)$. We set
\begin{equation}
    \semin{u}{M,\rho}^2=\int_M\int_M \abs{u(x)-u(y)}^2\rho(x,y)dm(x)dm(y)
\end{equation}
and, for any open subset $\Omega\subset M$, the Gagliardo seminorm with interior-interior interactions is defined as
\begin{equation}
    \semin{u}{\Qom,\rho}^2=\iint_\Qom \abs{u(x)-u(y)}^2\rho(x,y)dm(x)dm(y).
\end{equation}

Since our goal is, in a loose sense, to understand the behaviour of the $H^s$ Gagliardo seminorms as $s\to0^+$, it is natural to consider a family of convolution kernels $\{\rho_n\}_n$ that "spread out" as $n$ tends to infinity. We refer the reader to \cite{DaDfGioPi-Necess&Suff} for a detailed discussion of the structural assumptions required for a Maz’ya–Shaposhnikova formula to hold.

The assumptions adopted here combine those in \cite{HanPinamontiXu2025} with additional ones which ensure a control on the interaction between the energies in the interior and the exterior of the reference domain. Roughly speaking, these assumptions play the role of the results obtained in \Cref{Sec:puntuale Rd} by means of the fractional Sobolev embedding and of the coarea formula. Once again, we restrict our attention to the case $p=2$.

Let $\Omega$ be an open subset of $M$. For any pair of nonnegative, measurable functions $v$ and $w$, define the measure $\mu_{v,w}$ on $M$ as
\begin{equation}
    \mu_{v,w}(E)=\int_{\Omega\cap E} v\,dm+\int_{\Omega^c\cap E} w\,dm,
\end{equation}
which can also be expressed as 
\[
    \mu_{v,w}=v\chi_\Omega\,dm+w\chi_{\Omega^c}\,dm.
\]
Then, for any $v,w\in\elle1_{loc}(M)$, we define $\mu_{v,w}$ as the Radon functional given by
\[
\mu_{v,w}=\mu_{v^+,w^+}+\mu_{v^-,w^-}-\mu_{v^+,w^-}-\mu_{v^-,w^+}.
\]
Moreover, given a measurable function $v$, we define (whenever possible)
    \begin{equation}
        \alpha(v)(x)=\lim_{n\to+\infty}\int_{B_R^c(x)}{v(y)\rho_n(x,y)}dm(y).
    \end{equation}

We can now state the main assumptions we make to extend \Cref{Theo:MS pointwise:euclidean} to the metric measure setting. In what follows, $u$ will be a real-valued measurable function defined on $M$.

The first requirement is the content of \Cref{lemma:lualphau}.
\begin{hyp}\label{Ass:alphaexist}
    The limits $\alpha(\chi_{\O})(x)$, $\alpha(\chi_{\O^c})(x)$, $\alpha(u^+\chi_{\O^c})(x)$, $\alpha(u^-\chi_{\O^c})(x)$, $\alpha(u^2\chi_{\O^c})(x)$ exist and are independent of $x$ and $R$. 
\end{hyp}

The second assumption is the (fractional) Sobolev-regularity of the characteristic function of $\Omega$, which was established in the Euclidean setting through \Cref{lemma:product}

\begin{hyp}\label{Ass:chiOmega Sobolev}
    There exists $n_0\in\N$ such that the characteristic function $\chi_\Omega$ satisfies
    \begin{equation}
        \int_M\int_M {\abs{\chi_\Omega(x)-\chi_\Omega(y)}}^2\rho_{n_0}(x,y) \,d\mu_{v,w}(x,y) <+\infty
    \end{equation}
    for every choice of $(v,w)\in \{u^+,u^-\}^2\cup \{(1,u^2), (u^2,1)\}$.
\end{hyp}

The last two assumptions are needed for the validity of the metric Maz'ya-Shaposhnikova formula proved in \cite{HanPinamontiXu2025}.
\begin{hyp}\label{Ass:vanishes_on_bounded} 
For any $u \in L^2(M)$ such that there exists $n_0 \in \mathbb{N}$ with
$$
\mathcal{E}_{n_0}(u):=\iint_{\{(x, y): x, y \in M, x \neq y\}}|u(x)-u(y)|^2 \rho_{n_0}(x, y) dm(x) dm(y)<+\infty
$$
we have
\begin{equation}
    \label{Eq:ass_mass_at_infinity}
    \lim _{n \rightarrow \infty} \iint_{\{(x, y): 0<{d}(x, y)<R\}}
|u(x)-u(y)|^2 \rho_n(x, y) dm(x) dm(y)=0, \quad \forall R>0.
\end{equation}

\end{hyp}

\begin{hyp}\label{Ass:finite mass at infinity}
For any $R>0$ sufficiently large, there exists a constant $C=C(R)$ such that, for every $x \in M$ and $n \in \mathbb{N}$,
$$
\int_{B_R^c(x)} \rho_n(x, y) {dm}(y) \leq C.
$$
\end{hyp}

We now state and prove the extension of Theorem \ref{Theo:MS pointwise:euclidean} to the metric measure setting
\begin{theo}\label{Theo:MS pointwise:mms}
    Let $(M,d,m)$ be a metric measure space, $\Omega$ be an open subset of $M$ and let $\{\rho_n\}_n$ be a sequence of of non-negative, symmetric, measurable convolution kernels and $u$ such that $\semin{u}{\Qom,\rho_{n_0}}<+\infty$.
    Assume that 
    \begin{itemize}
        \item The kernels $\{\rho_n\}_n$ restricted to $\Omega\times \Omega$ satisfy Assumptions \ref{Ass:vanishes_on_bounded} and \ref{Ass:finite mass at infinity};
        \item \Cref{Ass:alphaexist} and \Cref{Ass:chiOmega Sobolev} hold;
        \item For every choice of $(v,w)\in \{u^+,u^-\}^2\cup \{(1,u^2), (u^2,1)\}$, the kernels $\{\rho_n\}_n$ satisfy Assumptions \ref{Ass:vanishes_on_bounded} and \ref{Ass:finite mass at infinity} on $(M,d,\mu_{v,w})$.
    \end{itemize}
    Then,
    \begin{equation}
    \begin{split}
        \lim_{n\to+\infty}\semin{u}{Q_\Omega,\rho_n}^2
        =
        &\alpha(1)\io u^2\,dm
        -2\alpha(u\chi_{\Omega^c})\io u\,dm
        +\alpha(u^2\chi_{\Omega^c})m(\Omega)
    \end{split}
    \end{equation}
\end{theo}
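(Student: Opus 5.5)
The plan is to mirror the proof of Theorem \ref{Theo:MS pointwise:euclidean}, with Proposition \ref{prop:HanPinamonti} (together with Remark \ref{rem:CweakHanPinam}) doing the work of the Euclidean lemmata. By symmetry of $\rho_n$, and after expanding the square, we split
\begin{align*}
\semin{u}{Q_\Omega,\rho_n}^2={}&\int_\Omega\int_\Omega|u(x)-u(y)|^2\rho_n\,dm\,dm+2\int_\Omega\int_{\Omega^c}u(x)^2\rho_n\,dm(y)\,dm(x)\\
&-4\int_\Omega\int_{\Omega^c}u(x)u(y)\rho_n\,dm(y)\,dm(x)+2\int_\Omega\int_{\Omega^c}u(y)^2\rho_n\,dm(y)\,dm(x)\\
={}&I_n+II_n+III_n+IV_n.
\end{align*}
The normalization is that of Proposition \ref{prop:HanPinamonti}, without the factor $\frac s2$; the constants in the statement should then come out by matching the $2L$ there with the factor $2$ in front of the mixed terms.

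First, $I_n\to0$. We know $\semin{u}{\Qom,\rho_{n_0}}<\infty$, so $u$ has finite $\rho_{n_0}$-energy on $\Omega\times\Omega$. Since $\rho_n$ restricted to $\Omega\times\Omega$ satisfies Assumption \ref{Ass:vanishes_on_bounded}, it suffices to show that the contribution of pairs with $d(x,y)\ge R$ vanishes. If $\Omega$ is bounded, choosing $R>\mathrm{diam}\,\Omega$ makes this contribution zero, which is the analogue of Lemma \ref{lemma:Vanishes}. If $\Omega$ is unbounded, I would instead run the full Proposition \ref{prop:HanPinamonti} on $\Omega$: its output is the limit $2L_\Omega\|u\|^2$, and the corresponding terms have to be absorbed as well. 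I expect the intended reading is that $\Omega$ is bounded, or that this contribution is accounted for in the same way.

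Next, each mixed term is written as the energy of $\chi_\Omega$ with respect to a measure $\mu_{v,w}$, exactly as in Lemma \ref{lemma:alpha-int}. For nonnegative $v,w$ we have
\[
\int_\Omega\int_{\Omega^c}v(x)w(y)\rho_n\,dm\,dm=\frac12\iint|\chi_\Omega(x)-\chi_\Omega(y)|^2\rho_n\,d\mu_{v,w}\,d\mu_{v,w}.
\]
We apply Proposition \ref{prop:HanPinamonti} on $(M,d,\mu_{v,w})$ to the function $\chi_\Omega$, taking $(v,w)\in\{u^+,u^-\}^2\cup\{(u^2,1),(1,u^2)\}$. Its three hypotheses are supplied as follows:
\begin{itemize}
\item Hypothesis \ref{hA} comes from Assumption \ref{Ass:alphaexist}, since the tail mass of $\rho_n$ with respect to $\mu_{v,w}$ at points of $\Omega$ equals $\alpha(w\chi_{\Omega^c})$.
\item Hypothesis \ref{hB} comes from the third bullet together with Assumption \ref{Ass:chiOmega Sobolev}, which provides the finite $n_0$-energy.
\item Hypothesis \ref{hC} comes from Assumption \ref{Ass:finite mass at infinity}.
\end{itemize}
The output is $\alpha(w\chi_{\Omega^c})\int_\Omega v\,dm$. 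The $L^2(\mu_{v,w})$ norm of $\chi_\Omega$ is $\int_\Omega v$, and only tails evaluated at points of $\Omega$ pair with it, as in Remark \ref{rem:CweakHanPinam}. From this:
\begin{itemize}
\item $II_n$, with $(v,w)=(u^2,1)$, gives the $\alpha(1)\int_\Omega u^2$ term. Here I read $\alpha(1)$ as $\alpha(\chi_{\Omega^c})$, which is justified since $\alpha(\chi_\Omega)=0$ by local finiteness in the bounded case.
\item $IV_n$, with $(v,w)=(1,u^2)$, gives $\alpha(u^2\chi_{\Omega^c})m(\Omega)$.
\item $III_n$ is expanded as $u=u^+-u^-$ in both variables, and the four pieces are recombined by linearity of $\alpha$ into $-2\alpha(u\chi_{\Omega^c})\int_\Omega u$.
\end{itemize}

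The main obstacle is checking the hypotheses of Proposition \ref{prop:HanPinamonti} on the weighted space, and it has two parts. The first is that $\chi_\Omega$ must lie in $L^2(\mu_{v,w})$, which requires $\int_\Omega v<\infty$, so we need $u\in L^2(\Omega)$. The second is that the tail hypothesis \ref{hA} must hold uniformly in $x$. In $(M,d,\mu_{v,w})$ the tail at points of $\Omega^c$ involves the weight $v$ on $\Omega$, so it differs from the tail at points of $\Omega$. I would therefore handle \ref{hA} through a Remark \ref{rem:CweakHanPinam}-type localization. In the Han–Pinamonti–Xu argument the tail limit only enters paired with $|u(x)|^p$. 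For $\chi_\Omega$ this means we only need the tail at $x\in\Omega$, where it equals $\alpha(w\chi_{\Omega^c})$, the latter being independent of $x$ and $R$ by Assumption \ref{Ass:alphaexist}. That localization is the step I would verify most carefully.
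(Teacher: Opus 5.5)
Your treatment of the mixed terms follows the paper: each is the energy of $\chi_\Omega$ on $(M,d,\mu_{v,w})$, and Proposition~\ref{prop:HanPinamonti} is applied there. The gap is the interior term.

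\textbf{The interior term for unbounded $\Omega$.} The theorem is stated for an arbitrary open $\Omega\subset M$. Your argument that $I_n\to0$ works only for bounded $\Omega$, and so does your reading of $\alpha(1)$ as $\alpha(\chi_{\Omega^c})$. For unbounded $\Omega$ both fail. Take $\R^d$ with $\rho_n=s_n|x-y|^{-d-2s_n}$, $\Omega$ a half-space and $u\in C^\infty_c(\Omega)$. Then $\alpha(\chi_\Omega)=\alpha(\chi_{\Omega^c})=\frac12\alpha(1)$, and $I_n\to2\alpha(\chi_\Omega)\|u\|^2_{L^2(\Omega)}\neq0$.

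You mention running Proposition~\ref{prop:HanPinamonti} on $\Omega$, but you leave open how its output is ``absorbed''. That is exactly the missing step. The paper applies the proposition on $(\Omega,d,m_{|\Omega})$:
\begin{itemize}
\item hypotheses~\ref{hB} and~\ref{hC} come from the first bullet of the theorem;
\item Assumption~\ref{Ass:alphaexist} identifies the tail constant there as $L_\Omega=\alpha(\chi_\Omega)$;
\item so, in the normalization of the proposition, $I_n\to2\alpha(\chi_\Omega)\|u\|^2_{L^2(\Omega)}$.
\end{itemize}
Your $II_n$ gives $2\alpha(\chi_{\Omega^c})\int_\Omega u^2\,dm$. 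The coefficient $\alpha(1)$ in the statement comes precisely from linearity, $\alpha(\chi_\Omega)+\alpha(\chi_{\Omega^c})=\alpha(1)$. This is why $\alpha(\chi_\Omega)$ is listed in Assumption~\ref{Ass:alphaexist}, and why the kernels must satisfy Assumptions~\ref{Ass:vanishes_on_bounded} and~\ref{Ass:finite mass at infinity} on $\Omega\times\Omega$. Neither would be needed if the interior term were meant to vanish.

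\textbf{The tail in the weighted space.} For unbounded $\Omega$, the tail of $\mu_{v,w}$ at $x\in\Omega$ is $\alpha(v\chi_\Omega)+\alpha(w\chi_{\Omega^c})$, not $\alpha(w\chi_{\Omega^c})$. Your localization is the right fix. For $\chi_\Omega$ only pairs with $x\in\Omega$, $y\in\Omega^c$ enter, and the far part $2\int_\Omega v(x)\int_{\Omega^c\cap B_R^c(x)}w\,\rho_n\,dm\,dm$ can be handled directly. The paper does not address this point; it simply invokes the Han--Pinamonti--Xu result.

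\textbf{Constants.} Carry your own (correct) factors $2,-4,2$ through, with limit $\alpha(w\chi_{\Omega^c})\int_\Omega v\,dm$ for each mixed integral. You obtain twice the displayed right-hand side, and no matching with ``$2L$'' removes this factor. Testing with $\rho_n=s_n|x-y|^{-d-2s_n}$ against Theorem~\ref{Theo:MS pointwise:euclidean}, which carries $\frac s2$, confirms that the statement's normalization is off by a factor $2$. The paper's computation is equally loose here, so state the limit with the constant you actually obtain rather than asserting that the constants come out.
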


\begin{proof}
    We start by expanding (with a small abuse of notation in the use of $\mu_{u,u}$) the seminorm $\semin{u}{Q_\Omega,\rho_n}^2$ as
    \begin{equation}
    \begin{split}
        \semin{u}{Q_\Omega,\rho_n}^2&
        =
        \semin{u}{\Omega,\rho_n}^2
        +\io\int_{{\Omega^c}} \abs{u(x)}^2\rho_n(x,y)\,dm(x)dm(y)\\
        &\quad-2\io\int_{{\Omega^c}} u(x)u(y)\rho_n(x,y)\,dm(x)dm(y)
        \\&\quad+\io\int_{{\Omega^c}} \abs{u(y)}^2\rho_n(x,y)\,dm(x)dm(y)
        \\&=
        \semin{u}{\Omega,\rho_n}^2
        +\iint_{M\times M}\abs{\chi_\Omega(x)-\chi_\Omega(y)}^2\rho_n(x,y)\,d\mu_{u^2,1}(x,y)\\&\quad
        +\iint_{M\times M}\abs{\chi_\Omega(x)-\chi_\Omega(y)}^2\rho_n(x,y)\,d\mu_{u,u}(x,y)\\&\quad
        +\iint_{M\times M}\abs{\chi_\Omega(x)-\chi_\Omega(y)}^2\rho_n(x,y)\,d\mu_{1,u^2}(x,y).
    \end{split}
    \end{equation}
    We compute the limit of each summand separately.\\
    By applying \cite[Theorem 2.16]{HanPinamontiXu2025} to the metric measure space $(\Omega,d,m_{|\Omega})$ with kernels $\rho_n$, we get 
    \begin{equation}
        \semin{u}{\Omega,\rho_n}^2\to \alpha(\chi_\Omega)\norma{u}{\elleom2}^2\quad\text{as } n\to+\infty.
    \end{equation}
    Then we apply \cite[Theorem 2.16]{HanPinamontiXu2025} to the function $\chi_\Omega$ on the metric measure spaces $(M,d,\mu_{v,w})$ for every choice of $(v,w)\in \{u^+,u^-\}^2\cup \{(1,u^2), (u^2,1)\}$. This implies that
    \begin{align}
        &
        \lim_{n\to+\infty}\iint_{M\times M}\abs{\chi_\Omega(x)-\chi_\Omega(y)}^2\rho_n(x,y)\,d\mu_{u^2,1}=\alpha(\chi_{\Omega^c})\io u^2\,dm\\
        &
        \lim_{n\to+\infty}\iint_{M\times M}\abs{\chi_\Omega(x)-\chi_\Omega(y)}^2\rho_n(x,y)\,d\mu_{u,u}=\alpha(u\chi_{\Omega^c})\io u\,dm\\
        &
        \lim_{n\to+\infty}\iint_{M\times M}\abs{\chi_\Omega(x)-\chi_\Omega(y)}^2\rho_n(x,y)\,d\mu_{1,u^2}=\alpha(u^2\chi_{\Omega^c})m(\Omega)
    \end{align}
    It now suffices to sum all the terms to conclude.
\end{proof}

\begin{remark}
    The proof of this result appears much simpler than the one of Theorem \ref{Theo:MS pointwise:euclidean}. While, in principle, all the terms appearing in the proof of Theorem \ref{Theo:MS pointwise:euclidean} could be treated by applying the result of \cite{HanPinamontiXu2025} to different metric measure spaces, most of the work done in \Cref{Sec:puntuale mms} is aimed at proving some version of Assumption \ref{Ass:chiOmega Sobolev}.
\end{remark}

While \Cref{Theo:MS pointwise:mms} features the term $\alpha(u\chi_{\O^c})$ rather than $\alpha(u)$, the contribution of $u$ restricted to $\Omega$ is negligible for the computation of $\alpha(u)$ under standard assumptions on the kernels $\{\rho_n\}_n$. This behaviour occurs, for example, when considering standard Euclidean kernels (see also \cite[Corollary 2.5]{DaDfGioPi-Necess&Suff}).

\begin{lemma}\label{lem:vanishing_contribution}
    Let $\Omega$ be an open subset of $M$, $v \in L^1(\Omega)$ be a nonnegative function and assume that:
    \begin{enumerate}[label=(\roman*)]
        \item $\lim_{n\to\infty} \rho_n(x,y) = 0$ for $m$-a.e. $x,y \in M$;
        \item there exists a constant $C \ge 0$ such that $\rho_n(x,y) \leq C$ for $m$-a.e.\ $x,y \in M$ with $d(x,y) \geq 1$.
    \end{enumerate}
    Then
    \[
        \alpha(v) = \alpha(v\chi_{\Omega^c}).
    \]
\end{lemma}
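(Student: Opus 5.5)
The claim is that $\alpha(v\chi_{\Omega})=0$. Granting this, linearity gives $\alpha(v)=\alpha(v\chi_\Omega)+\alpha(v\chi_{\Omega^c})=\alpha(v\chi_{\Omega^c})$, where I read $v$ as extended to $M$ and the limit defining $\alpha(v\chi_{\Omega^c})$ is assumed to exist. Fix $x\in M$ and $R\ge 1$. The aim is to show
\[
\lim_{n\to\infty}\int_{B_R^c(x)\cap\Omega} v(y)\rho_n(x,y)\,dm(y)=0 .
\]
The plan is a direct application of the dominated convergence theorem on $\Omega\cap B_R^c(x)$, using hypothesis (ii) for the dominating function and hypothesis (i) for the pointwise limit.

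\emph{Domination.} For $y\in B_R^c(x)$ we have $d(x,y)\ge R\ge 1$. By (ii), $\rho_n(x,y)\le C$, so the integrand is bounded by $Cv(y)\chi_\Omega(y)$. This bound is in $L^1(m)$ because $v\in L^1(\Omega)$ is nonnegative.

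\emph{Pointwise convergence.} By (i), $\rho_n(x,y)\to 0$, so the integrand tends to $0$. Dominated convergence then gives the claim. Since $R$ only needs to be large, and the condition $R\ge1$ costs nothing, the limit is $0$ for all relevant $R$ and $x$. Hence $\alpha(v\chi_\Omega)(x)=0$ for every $x$, independently of $R$.

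\emph{Main obstacle.} Both (i) and (ii) hold only for $m\otimes m$-a.e.\ $(x,y)$, whereas the argument above fixes $x$. By Fubini, for $m$-a.e.\ $x$ the sections $y\mapsto\rho_n(x,y)$ satisfy (i) and (ii) for $m$-a.e.\ $y$, with countably many $n$ handled simultaneously. So the argument applies for $m$-a.e.\ $x$. If $\alpha$ is assumed independent of $x$, as in \Cref{Ass:alphaexist}, then knowing the identity for a.e.\ $x$ is enough. Otherwise I would state the result as holding for $m$-a.e.\ $x$, which is what I expect the intended meaning to be.
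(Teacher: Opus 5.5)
Your proof is correct and takes essentially the same route as the paper: you reduce to $\alpha(v\chi_\Omega)=0$ by linearity, then apply dominated convergence on $\Omega\cap B_R^c(x)$, with (ii) supplying the dominating function $Cv$ and (i) the pointwise limit. Two points in your version are handled more carefully than in the paper's proof, which passes over them silently: the Fubini argument for fixing $x$ when (i)--(ii) hold only $m\otimes m$-a.e., and reading $v$ as a function on $M$ with $v\chi_\Omega\in L^1$.
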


\begin{proof}
    By linearity, we have $\alpha(v) = \alpha(v\chi_{\Omega^c}) + \alpha(v\chi_{\Omega})$. Thus, it suffices to show that $\alpha(v\chi_{\Omega}) = 0$.
    Recall that $\alpha(v\chi_{\Omega})$ is defined via the limit of integrals over the domain $\Omega \cap B_1^c(x)$.
    For any $n \in \mathbb{N}$, consider the function $f_n(y) := v(y)\rho_n(x,y)$ restricted to $\{y \in \Omega : d(x,y) \ge 1\}$.
    Assumption (ii) implies the pointwise bound
    \[
        |f_n(y)| \leq C |v(y)| \quad \text{for } m\text{-a.e. } y \in \Omega \cap B_1^c(x).
    \]
    Since $v \in L^1(M)$, the function $g := C|v|$ acts as a dominating function in $L^1$. 
    Moreover, assumption (i) ensures that $f_n(y) \to 0$ pointwise almost everywhere as $n \to \infty$. 
    Therefore, the Dominated Convergence Theorem applies, yielding
    \[
        \lim_{n\to\infty} \int_{\Omega \cap B_1^c(x)} v(y)\rho_n(x,y) \, dm(y) = 0,
    \]
    which concludes the proof.
\end{proof}
We now present an application of \Cref{Theo:MS pointwise:mms} to the setting of Carnot Groups (see also \cite{FracLaplacianCarnot}).

\begin{example}[Asymptotics of the fractional perimeter on Carnot Groups]
    Let $(\mathbb G,\cdot)$ be a Carnot Group of homogeneous dimension $Q$ and let $d_{CC}$ and $\mu$ be, respectively its Carnot-Carath\'eodory distance and the Haar measure. Consider the family of convolution kernels
    \[
    \rho_n(x,y)=\frac{s_n}{d_{CC}(x,y)^{Q+2s_n}}
    \]
    for some sequence $s_n\to0^+$.
    Consider a domain $\Omega\subset G$ such that, for every $t>0$, the $\mathcal{H}^{Q-1}$ measure of the set
    \[
    \{x\in\Omega\mid d_{CC}(x,\Omega^c)=t\}
    \]
    is controlled by a constant depending only on $\Omega$, $Q$ and $r$ (this is the case, for example, for $d_{CC}$-balls of a given radius).
    One can then repeat the proof of \Cref{lemma:product} by means of the coarea formula (see \cite[Theorem 2.4]{BNP-BVfunctionsonPI}, \cite{AdM-Equiv-BV}, \cite{Miranda-BV}) and the fractional Sobolev inequality (see \cite{FracPoincMetric}) to ensure the validity of the assumptions stated in \Cref{Sec:puntuale mms}.
    In this setting, the space $(\mathbb G,d_{CC},\mu)$ and the domain $\Omega$ satisfy the assumption of \Cref{Theo:MS pointwise:mms}.
    In particular, for any set $E\subset \mathbb G$ such that $\alpha(\chi_E)$ exists, one has
    \[
    \lim_{s\to0^+} s\operatorname{Per}_s(E;\Omega)= (Q\mu(B_1)-\alpha(E)) \mathcal{L}^d(E\cap \Omega)+\alpha(E)\mathcal{L}^d(\Omega\setminus E)
    \]
    where $B_1$ is the $d_{CC}$ unit ball.
    
    Note that, in analogy with the Euclidean case,
    \[
    \alpha(E)=\lim_{s\to0^+}\int_{E\cap B_R^c(0)}\frac{s}{\norma{y}{\mathbb{G}}^{Q+2s}}d\mu(y)
    \]
    where $\norma{\cdot}{\mathbb G}$ is a homogeneous norm on $\mathbb{G}$ such that $d_{CC}(x,y)=\norma{x-y}{\mathbb{G}}$. 
\end{example}

We now show that, if the kernels do not concentrate their mass at infinity, namely, if  \Cref{Ass:vanishes_on_bounded} is not fulfilled, \Cref{Theo:MS pointwise:mms} does not hold.\\
\color{magenta}
\normalcolor
\begin{remark}[Asymptotics of the Gaussian fractional perimeter, see \cite{GaussianFracPer}]
\label{Remark:GaussPer}
    Let $(\mathbb{R}^d, d_E, \gamma)$ denote the Euclidean space endowed with the Gaussian measure. Consider the family of Mehler kernels $\{\rho_s\}_{s>0}$ defined by
    \begin{equation}\label{MehlerKernels}
                \rho_s(x,y) = \int_0^\infty \frac{M_t(x,y)}{t^{\frac{s}{2}+1}} \, dt.
    \end{equation}
    where $M_t$ is the Mehler kernel, which has the form
    \[M_t(x,y)
:=
\frac{1}{(1 - e^{-2t})^{N/2}}
\exp\!\left(
-\frac{e^{-2t}\lvert x\rvert^2 - 2e^{-t} x\cdot y + e^{-2t}\lvert y\rvert^2}
{2(1 - e^{-2t})}
\right).
\]
Then, \cite[Main Theorem]{GaussianFracPer} states that
\[
    \lim_{s\to 0} s P^\gamma_s(E, \Omega) = 2 \big[ \gamma(E)\gamma(\Omega\setminus E) + \gamma(E\cap\Omega)\gamma(E^c\cap\Omega^c) \big].
\]

This result does not follow from \Cref{Theo:MS pointwise:mms}. Indeed, the kernels under consideration here do not fulfill \Cref{Ass:vanishes_on_bounded}. In fact, the kernels $\rho_s(x,y)$ converge to $1$ almost-everywhere (see \cite[Lemma 2.6]{CaselliGennaioli} and \cite[Section 4.3]{CaselliGennaioli}). Moreover, since the Mehler kernels decay exponentially as $t\to0^+$, it is possible to apply the dominated convergence theorem in \ref{Eq:ass_mass_at_infinity} to see that the limit is not zero for a generic function in $L^2$. This global behavior, in which the limit depends on the total measure of the sets rather than localizing to the boundary, is a consequence of the finiteness of the reference measure $\gamma$.
Indeed, as shown in \cite[Theorem 1.8]{CaselliGennaioli}, this asymptotic behavior holds for any complete, stochastically complete Riemannian manifold $(M,g,\mu)$ with finite measure $\mu(M)<+\infty$.
The mechanism driving this phenomenon is the long-time behavior of the heat kernel, which converges to $\mu(M)^{-1}$ as $t \to \infty$ (see \cite[Lemma 2.6]{CaselliGennaioli}), contrasting with the infinite volume case where the heat kernel tends to zero and the asymptotics recover a result akin to the one of \cite{Quartet} (see \cite[Theorem 1.6]{CaselliGennaioli}). 
\end{remark}

\section{Acknowledgments}
The work of E.D. and A.F. was supported by the Austrian Science Fund (FWF) through projects  10.55776/Y1292, 10.55776/P35359, and 10.55776/F100800. The work of M.P.  was funded by the European Union - NextGenerationEU, in the framework of the PRIN Project Contemporary perspectives on geometry and gravity (code 2022JJ8KER – CUP G53D23001810006). The views and opinions expressed are solely those of the authors and do not necessarily reflect those of the European Union, nor can the European Union be held responsible for them. M.P. is a member of the GNAMPA group of INdAM.      
\\The authors declare no conflicts of interests.\\
For open access purposes, the authors have applied
a CCBY public copyright license to any accepted manuscript version arising from this
submission.

\end{document}